\documentclass{article}
\usepackage[utf8]{inputenc}
\usepackage[left=2cm,right=2cm,top=2cm,bottom=2cm]{geometry}
\pdfoutput=1
\usepackage{lineno,hyperref}
\modulolinenumbers[5]
\usepackage{amsmath}
\usepackage{amssymb}
\usepackage{float}
\usepackage{mathtools}
\usepackage{lipsum}
\usepackage{caption}
\usepackage{subcaption}
\usepackage{commath}
\usepackage{cases}
\usepackage{amsthm}
\usepackage{graphicx}
\usepackage{listings}
\usepackage{multicol}
\usepackage{algorithm}
\usepackage{algpseudocode}

  
\usepackage[sc,osf]{mathpazo}
 \newcommand{\spann}{\textrm{span}}

\newcommand{\uu}{\mathbf{u}}

\newcommand{\xx}{\textbf{x}}

\newcommand{\vv}{v}

\newcommand{\mm}{\mathcal{M}}

\newcommand{\wt}{\widetilde{t}}

\newcommand{\pt}{\partial}


\newtheorem{theorem}{Theorem}[section]

\newtheorem{corollary}[theorem]{Corollary}

\newtheorem{remark}[theorem]{Remark}

\begin{document}
 \title{\noindent Error estimate of the Non-Intrusive Reduced Basis (NIRB) two-grid method with parabolic equations}
\maketitle
\normalsize
\begin{center}
\author{Elise Grosjean \footnotemark[1],}
\author{Yvon Maday \footnotemark[2] \footnotemark[3]}
\end{center}

\footnotetext[1]{Felix-Klein-Institut für Mathematik, Kaiserslautern TU, 67657, Deutschland}
\footnotetext[2]{Sorbonne Universit\'e and Universit\'e de Paris Cité, CNRS, Laboratoire Jacques-Louis Lions (LJLL), F-75005 Paris, France}
\footnotetext[3]{Institut Universitaire de France}

\date

\begin{abstract}
Reduced Basis Methods (RBMs) are frequently proposed to approximate parametric problem solutions. 
They can be used to calculate solutions for a large number of parameter values (e.g. for parameter fitting) as well as to approximate a solution for a new parameter value (e.g. real time approximation with a very high accuracy).
They intend to reduce the computational costs of High Fidelity (HF) codes. They necessitate well-chosen solutions, called snapshots, that have been previously computed (e.g. offline) with a HF classical method, involving, for instance a fine mesh (finite element or finite volume) and generally require a profound modification of the HF code, in order for the online computation to be performed in short (or even real) time. \\
We will focus on the Non-Intrusive Reduced Basis (NIRB) two-grid method. Its main advantage is that it uses the HF code exclusively as a "black-box," as opposed to other so-called intrusive methods that require code modification. This is very convenient when the HF code is a commercial one that has been purchased, as is frequently the case in the industry. The effectiveness of this method relies on its decomposition into two stages, one offline (classical in most RBMs as presented above) and one online. The offline part is time-consuming but it is only performed once.
On the contrary, the specificity of this NIRB approach is that, during the online part, it solves the parametric problem on a coarse mesh only and then improves its precision. As a result, it is significantly less expensive than a HF evaluation. This method has been originally developed for elliptic equations with finite elements and has since been extended to finite volume.\\
In this paper, we extend the NIRB two-grid method to parabolic equations.
We recover optimal estimates in $L^{\infty}(0,T;H^1(\Omega))$ using as a model problem, the heat equation. Then, we present numerical results on the heat equation and on the Brusselator problem.
\end{abstract}


\section{Introduction.}
\label{introduction}
Let $\Omega$ be a bounded domain in $\mathbb{R}^d$, with $d \leq 3$ and a smooth enough boundary $\pt \Omega$, and consider a parametric problem $\mathcal{P}$ on $\Omega$. 
Non-Intrusive Reduced Basis (NIRB) methods are an alternative to classical Reduced Basis Methods (RBMs) for approximating the solutions of such problems where the parameter is denoted as $\mu$, in a given set $\mathcal{G}$ \cite{chakirrect,inproceedings} (see also different NIRB methods \cite{Casenave2014,barrault2004empirical,willcox} from the two-grid method). From an engineering point of view, they may be more practical to implement than intrusive RBMs, as they only require the execution of the High-Fidelity (HF) code as a ``black-box'' solver.
The NIRB methods, like most RBMs, rely on the assumption that the manifold of all solutions $\mathcal{S} = \{u(\mu), \mu \in \mathcal{G} \}$ has a small Kolmogorov width \cite{kolmo} (in what follows, $u_h(\mu)$ will refer to the HF solution for the parameter $\mu$). Let us first recall the method for stationnary problem.
\subsection{Reminders on the NIRB two-grid method for stationnary problems.}
In the context of a finite element or finite volume HF solver, the two-grid method involves two partitioned meshes (or "grid"), one fine mesh $\mm_h$ and one coarse $\mm_H$, where the respective sizes $h$ and $H$ of the meshes are such that $h << H$. The size $h$ (respectively $H$) is defined
as \begin{equation}
  \label{meshsize}
  h = \underset{K\in \mm_h}{\textrm{max }} h_K \; (\textrm{respectively  }H = \underset{K\in \mm_H}{\textrm{max }} H_K),
\end{equation}
where the diameter $h_K$ (or $H_K$) of any element $K$ in a mesh is equal to $\underset{x,y \in K}{\sup \ } |x-y|, K \in M_h$ (or~$\in M_H)$.\\
The fine mesh is used to construct the Reduced Basis (RB).\\
 The reduced space $X_h^N:=~Span\{u_h(\mu_i)~|~i~=~1~, \dots N\}$ is generated using $N$ snapshots.
The solution for a new parameter is then roughly and quickly approximated using a coarse mesh.
The latter, as well as the algorithm's offline-online decomposition, are critical components in reducing complexity.
Below are the main steps of the NIRB two-grid algorithm:

\begin{itemize}
\item ``Offline stage'': \\

  First, in this stage, the RB functions that belong to the reduced space denoted $X_h^N$ are prepared on the fine mesh using a greedy procedure \cite{greedy,rb} (an alternative is to use a Proper Orthogonal Decomposition (POD) \cite{POD3,hesthaven2016certified}).
The greedy procedure computes the modes by iteratively selecting some suitable parameters $\mu_1, \dots, \mu_N \in \mathcal{G}$ and computing the approximate solutions $u_h(\mu_1), \dots,u_h(\mu_N)$.
This part is time-consuming, but it is only performed once, as with other RBMs.
After running a Gram-Schmidt orthonormalization algorithm, we obtain $N$ $L^2$-orthonormalized basis functions, denoted $(\Phi_i^h)_{i=1,\dots,N}$.
In order to improve the accuracy of the online reconstruction (as detailed in section \ref{NIRBproof}), we run the following eigenvalue problem:
    \begin{numcases}
      \strut \textrm{Find } \Phi^h \in X_h^N, \textrm{ and } \lambda \in \mathbb{R} \textrm{ such that: }    \nonumber\\
        \forall \vv \in X_h^N, \int_{\Omega} \nabla \Phi^h \cdot \nabla \vv \ d\xx= \lambda \int_{\Omega} \Phi^h \cdot \vv \ d\xx,
    \end{numcases}
    and we obtain an increasing sequence of eigenvalues $\lambda_i$, as well as orthogonal eigenfunctions $(\Phi_i^h)_{i=1,\cdots,N}$, orthonormalized in $L^2(\Omega)$ and orthogonal in $H^1(\Omega)$, and define a new basis of the space $X_h^N$.\\
    
    As written above, a coarse approximation for a new parameter $\mu \in \mathcal{G}$ will be used during the online stage. As we will see later, for any parameter  $\mu_k, \ k=1,\dots,N$, the classical NIRB approximation differs from the HF $u_h(\mu_k)$ computed in the offline stage. Thus, as proposed in \cite{madaychakir}, we use a "rectification post-processing" and introduce a rectification matrix, denoted $\mathbf{R}$ to cover this for these particular choices of $\mu=\mu_k,\ k=1,\dots,N$, and improve NIRB accuracy for other instance of $\mu$.
In addition to the fine snapshots, coarse snapshots are used in the construction of this matrix, which are generated using the same parameters as for the fine snapshots. Then, we compute the vectors
    \begin{equation}
              \mathbf{R}_i=(\mathbf{A}^T\mathbf{A}+\delta \mathbf{I}_{N})^{-1}\mathbf{A}^T \mathbf{B}_i, \quad  i=1, \cdots,N,
              \label{rectiffz1}
    \end{equation}
    where
    \begin{align}
      & \forall i=1,\cdots,N,\quad  \textrm{ and }  \quad \forall \mu_k \in  \mathcal{G},\nonumber \\
      & \quad  A_{k,i}=\int_{\Omega} \uu_H(\mu_k) \cdot \Phi_i^h\ \textrm{d}\xx, \label{Aj} \\
      & \quad B_{k,i}=\int_{\Omega} \uu_h(\mu_k) \cdot \Phi_i^h\ \textrm{d}\xx,     \label{Bj}
\end{align}
    where $\mathbf{I}_{N}$ refers to the identity matrix and $\delta$ is a regularization term, as proposed in \cite{chakirrect}.
    
  \item ``Online stage'':\\
    Then, for a new parameter $\mu \in \mathcal{G}$ for which we want to estimate the solution, a coarse approximation of the solution, denoted $u_H(\mu)$, is first computed "online."
This coarse approximation is, of course, not of sufficient precision, but it is calculated much faster than the HF one.
The NIRB post-processing then improves precision significantly by projecting $u_H(\mu)$ on the RB in a very short runtime \cite{chakirrect,NIRB1,VF,inproceedings}.
The classical NIRB approximation is given by
\begin{equation}
          \label{stationary}
       u_{Hh}^N(\mu):= \overset{N}{\underset{i=1}{\sum}}(u_H(\mu),\Phi_i^h)\ \Phi_i^h.
\end{equation}
where $(\cdot,\cdot)$ denotes the $L^2$-inner product.
      Now, to improve precision, we use the ``rectification post-treatment'', and the NIRB approximation reads 
      \begin{equation}
       \label{stationarywithrect}
       R[u_{Hh}^N](\mu):= \overset{N}{\underset{i,j=1}{\sum}}R_{ij}\ (u_H(\mu),\Phi_j^h)\ \Phi_i^h.
         \end{equation}
         Note that, when the relaxation parameter $\delta$ is equal to $0$ the rectification process allows to retrieve the fine coefficients (given by \eqref{Bj})  from the coarse ones (given by \eqref{Aj}) for the parameters $\mu=\mu_k$, $k=1,\dots,N$. In other words, with  $\delta=0$, we have
         \begin{equation*}
           R[u_{Hh}^N] (\mu_k)=u_h (\mu_k), \quad k=1, \dots , N.
           \end{equation*}
\end{itemize}

\subsection{Motivation and earlier works.}

The two-grid method is simple to implement and can be used for a variety of PDEs and approximations.
Furthermore, because it is non-intrusive, it is suitable for a wide range of problems.
To our knowledge, however, this method has not yet been studied or implemented in the context of time-dependent problems ~\cite{NIRB1,chakirrect,inproceedings,censity}.\\
  The two-grid method has been developed and analyzed for elliptic equations in the context of FEM (with Céa's and Aubin-Nitsche's lemmas) in \cite{madaychakir}. The energy-error estimate is then given by
    \begin{equation}
    \norm{u(\mu) - u_{Hh}^N(\mu)}_{H^1(\Omega)}\leq \varepsilon(N) +C_1 h + C_2(N) H^2, 
    \label{estimationNIRBFEM}
    \end{equation}
    where $C_1$ and $C_2$ are constants independent of $h$ and $H$, and $C_2$ depends on $N$ only. The term $\varepsilon(N)$ depends on a proper choice of the RB space as a surrogate for the best approximation space associated to the Kolmogorov $N$-width. 
    It decreases when $N$ increases and it is linked to the error between the fine solution and its projection on $X_h^N$, given by
    \begin{equation}
      \label{truerror}
  \norm{u_h(\mu) - \overset{N}{\underset{i=1}{\sum}}(u_h(\mu),\Phi_i^h)\ \Phi_i^h}_{H^1(\Omega)}.
    \end{equation}
    The second term in \eqref{estimationNIRBFEM}, $C_1 \ h $, is a contribution obtained through Céa's lemma for the RB elements and the second one, $C_2(N) \ H^2$, through Aubin-Nitsche's lemma for the coarse grid approximation of $u(\mu)$. Note that since the constant $C_2$ increases with $N$, a trade-off needs to be done between increasing $N$ to obtain a more accurate manifold, and keeping a constant $C_2$ as low as possible. \\
    The estimate \eqref{estimationNIRBFEM} proves that in the parabolic context with FEM, if the coarse mesh size is chosen so that $H^2=h$, we obtain the optimal $H^1$ convergence rate. Furthermore, it has been numerically shown that for a large range of $H$, the rectification post-treatment allows for the recovery of the fine solution's accuracy.\\
    This two-grid method has also been generalized and analyzed in the context of finite volume schemes such as \cite{VF}, in which a surrogate to Aubin-Nitsche's is used.
    
    \subsection{Outline  of  the  paper.}

This article is about the application of NIRB to time-dependent problems and its numerical analysis in the context of parabolic equations.\\
We will first define the NIRB approximation with and without the rectification post-treatment, as an extention of \eqref{stationary} and \eqref{stationarywithrect}.
We will then prove theoretically that we can recover optimal error estimates in $L^{\infty}(0,T;H^1(\Omega))$. The theorem  \ref{EllipticEst}  on the numerical analysis of the approach's convergence provides our main result. Then we will present numerical results \ref{results} with and without the rectification post-processing. We will illustrate that this post-treatment allows us to retrieve the fine accuracy in a parabolic context as well.\\

The remainder of this paper is structured as follows. The mathematical context is described in section  \ref{paragraphParabolic}.
The two-grids method is presented in section \ref{NIRB} in the context of parabolic equations. The proof of theorem \ref{EllipticEst} is covered in the section \ref{NIRBproof}.
Finally, the implementation is discussed in the last section \ref{results}, and the theoretical results are illustrated with numerical results on the NIRB method with and without the rectification post-treatment.\\

In the next sections, $C$ will denote various positive constants independent of the size of the meshes $h$ and $H$ and of the parameter $\mu$, and $C(\mu)$ will denote constants independent of the sizes of the meshes $h$ and $H$ but dependent of $\mu$. 

\section{Mathematical Background.}
\label{paragraphParabolic}

\subsection{The continuous problem.}
We will consider the following heat equation on the domain $\Omega$ with homogeneous Dirichlet conditions, which takes the form

\begin{numcases}{}
& $u_t-\mu \Delta u =f,\ \textrm{ in }\Omega\times ]0,T]$, \nonumber\\
& $u(\cdot,0)=u^0, \ \textrm{ in }\Omega$,   \label{heatEq2} \\
& $u(\cdot,t)=0, \ \textrm{ on }\pt \Omega \times ]0,T]$, \notag
\end{numcases} 
where $f \in L^2(\Omega \times [0,T] )$, while $u^0 \in H_0^1(\Omega)$ and $0 < \mu  \in \mathcal{G}$ is the parameter. For any $t>0$, the solution $u(\cdot,t) \in H_0^1(\Omega)$, and $u_t(\cdot, t) \in L^2(\Omega)$ stands for the derivative of $u$ with respect to time.
   
We use the conventional notations for space-time dependent Sobolev spaces \cite{lions1961problemes}
\begin{align*}
 & L^p(0,T;V):=\{u(\xx,t)\ | \ \norm{u}_{L^p(0,T;V)}:=\Big(\int_0^T \norm{u(\cdot,t)}_{V}^p \ dt \Big)^{1/p}< \infty \}, \ 1\leq  p < \infty,\\
  &    L^{\infty}(0,T;V):=\{u(\xx,t)\ | \ \norm{u}_{L^{\infty}(0,T;V)}:= ess \ \underset{0\leq t \leq T }{sup} \ \norm{u(\cdot, t)}_V< \infty \},\\
\end{align*}
where $V$ is a real Banach space with norm $\norm{\cdot }_V.$ 
The variational form of \eqref{heatEq2} is given by:\\
\begin{numcases}{}
  & Find $u\in L^2(0,T;H_0^1(\Omega))$ with $u_t \in L^2(0,T;H^{-1}(\Omega))$ such that \nonumber \\
 & $ (u_t(t,\cdot),v)+a(u(t,\cdot),v;\mu)=(f(t,\cdot),v), \ \forall v \in H_0^1(\Omega) \textrm{ and } t\in (0,T)$,  \label{varpara} \\
 & $ u(\cdot, 0)=u^0 , \textrm{ in } \Omega,$ \nonumber
\end{numcases}
where $a$ is given by \begin{equation}
  a(w,v;\mu)=\int_{\Omega}\mu \nabla w(\xx)\cdot \nabla v (\xx)\ d\xx,\quad \forall w, v\in H_0^1(\Omega).
  \label{ateerm1}
\end{equation}
We remind that \eqref{varpara} is well posed (see \cite{evans10} for the existence and the uniqueness of solutions to problem \eqref{varpara}) and we refer to the notations of \cite{evans10}.
\begin{remark}{(On the stability).}
We intend to state estimates for the NIRB approximation for all time snapshots, that is related to maximum-norm in time with the either $L^2$ norm or $H^1$ norm in space, i.e. in  $L^{\infty}(0,T;L^2(\Omega))$ and $L^{\infty}(0,T;H^1(\Omega))$, which is stronger than with the usual stability study of the parabolic equation \eqref{heatEq2}.
Let us remind the classical (or less standard) stability results. We derive from \eqref{varpara} by using $v=u$
\begin{equation}
\label{prout}
(u_t,u)+\mu \norm{\nabla u}^2  = |(f,u)|.
\end{equation}
From the Young and Poincaré inequalities, there exists $C>0$ such that
\begin{equation*}
|(f,u)| \leq (\frac{1}{2\mu} \norm{f}^2) +  \frac{\mu}{2} \norm{\nabla u}^2.
\end{equation*}
and since \begin{equation*}
(u_t,u)=\frac{1}{2}\frac{d}{dt} \norm{u}^2,
\end{equation*} \eqref{prout} yields
\begin{equation*}
\frac{d}{dt} \norm{u}^2 + \mu \norm{\nabla u}^2 \leq \frac{1}{\mu} \norm{f}^2,
\end{equation*}
and integrating over $(0,t)$ for all $t\leq T$,  we end up with
\begin{equation*}
\norm{u(t)}^2+ \mu \int_0^t \norm{\nabla u(s)}^2 \ ds \leq C(\norm{u^0}^2+\frac{1}{\mu}\int_{0}^t \norm{f(s)}^2 \ ds),
\end{equation*}
which gives
\begin{equation*}
\norm{ u}_{L^{\infty}(0,T;L^2(\Omega))}^2 + \mu \norm{ u}_{L^2(0,T;H_0^1(\Omega))}^2 \leq C (\norm{u^0}^2_{L^2(\Omega)}+\frac{1}{\mu}\norm{f}^2_{L^2(0,T;L^2(\Omega))}).
\label{est5ParabolBisL2}
\end{equation*}
That establishes the first stability result.\\
For the $L^{\infty}(0,T;\ H^1(\Omega))$ stability, a classical result is
\begin{equation*}
  \int_0^t \norm{u_t(s)}^2 \ ds+ \mu \norm{\nabla u(t)}^2  \leq \mu \ \norm{\nabla u^0}^2 +  \int_0^t \norm{f(s)}^2 \ ds,
\end{equation*}
and this ``a priori'' estimate then leads to the second stability results
\begin{equation*}
  \norm{ u}_{L^{\infty}(0,T;H_0^1(\Omega))}^2 \leq  \norm{\nabla u^0}^2_{L^2(\Omega)} + \frac{1}{\mu}  \norm{f(s)}^2_{L^2(0,T;L^2(\Omega))}.
\end{equation*}

\end{remark}
\subsection{The various discretizations.}
As in previous work on the NIRB FEM applied to elliptic equations \cite{madaychakir}, we consider one fine spatial grid for computing "offline" snapshots associated with few parameter values and one coarse grid for the coarse solution, with sizes denoted as $h$ and $H$ (with $h<< H$) \eqref{meshsize}. These grids are used for the spacial discretizations of the weak formulation of problem \eqref{heatEq2}.
We employed $\mathbb{P}_1$ finite elements to discretize in space, so let $V_h$ and $V_H$ be continuous piecewise linear finite element functions (on fine and coarse mesh, respectively) that vanish on the boundary $\pt \Omega$.
We consider the projection operator  $P^1_h$ on $V_h$ ($P^1_H$ on $V_H$ is defined similarly) which is given by
\begin{equation}
(\nabla P^1_h u, \nabla v)=(\nabla u, \nabla v),\quad  \forall v \in V_h,
  \label{P1opParabol}
\end{equation}

In the context of time-dependent problems, a time stepping method of finite difference type is used to get a fully discrete approximation of the solution of \eqref{heatEq2}. 
    We consider two different time grids:
\begin{itemize}
\item One time grid, denoted $F$, is employed for the fine solution (for the snapshots construction). To avoid making notations more cumbersome, we will consider a uniform time step $\Delta t_F$. The time levels can be written $t^n = n \ \Delta t_F$, where $n \in \mathbb{N}^*$.
\item Another time grid, denoted $G$, is used for the coarse solution. By analogy with the fine grid, we consider a uniform grid with time step $\Delta t_G$. Now, the time levels are written $\widetilde{t}^m = m \ \Delta t_G$, where $m \in \mathbb{N}^*$. 
\end{itemize}

As in the previous analysis with elliptic equations, the NIRB algorithm is designed to recover the optimal estimate in space. However, there is no such argument as the Aubin-Nitsche argument for time stepping methods, so we must consider time discretizations that provide the same precision with larger time steps. Thus, we consider a higher order time scheme for the coarse solution.
We will use an Euler scheme (first order approximation) for the fine solution and a Crank-Nicolson scheme (second order approximation) for the coarse solution with our model problem.\\
Thus, we deal with two kind of notations for the discretized solutions:
\begin{itemize}
\item $u_h(\xx,  t)$ and $u_H(\xx,t)$ that respectively denote the fine and coarse solutions of the spatially semi-discrete solution, at time $t \geq 0$.
\item $u_h^n(\xx)$ and $u_H^m(\xx)$ that respectively denote the fine and coarse full-discretized solutions at time $t^n=n \times \Delta t_F$ and $\widetilde{t}^m = m\times \Delta t_G$.
\end{itemize}

\begin{remark}
  To simplify the notations, we consider that both time grids end at time $T$ here,
  \begin{equation*}
    T\ =\ N_T\ \Delta t_F \ =\  M_T\ \Delta t_G.
  \end{equation*}     
\end{remark}

The semi-discrete form of the variational problem \eqref{varpara} writes for the fine mesh (similarly for the coarse mesh):
\begin{numcases}{}
  & Find $u_h(t)=u_h(\cdot,t) \in V_h$ for $t\in [0,T]$ such that \nonumber \\
 & $ (\pt_t u_h(t),v_h)+a(u_h(t),v_h;\mu)=(f(t),v_h), \ \forall v_h \in V_h \textrm{ and } t \in ]0,T]$, \label{varpara1disc} \\
 & $ u_h(\cdot, 0)\ =\ u_h^0 \ =\ P_h^1(u^0)$. \nonumber
\end{numcases}
The full discrete form of the variational problem \eqref{varpara} for the fine mesh with implicit Euler scheme writes:
\begin{numcases}{}
  &  Find $u_h^n \in V_h$ for $n= 0,\dots,N_T$ such that \nonumber \\
 & $ (\overline{\pt} u_h^n,v_h)+a(u_h^n,v_h;\mu)=(f(t^n),v_h), \ \forall v_h \in V_h \textrm{ and } n = 1,\dots,N_T$,\label{varpara2disc}\\
 & $ u_h(\cdot, 0)\ =\ u_h^0 $, \nonumber
\end{numcases}
where the time derivative in the variational form of the problem \eqref{varpara1disc} has been replaced by a backward difference quotient, $\overline{\pt} u^n_h=\frac{u^n_h-u^{n-1}_h}{\Delta t_F}$.\\
For the coarse mesh With Crank-Nicolson scheme, and with the notation $\overline{\pt} u^m_H=\frac{u^m_H-u^{m-1}_H}{\Delta t_G}$, it becomes:
\begin{numcases}{}
  &  Find $u_H^m \in V_H$ for $m = 0,\dots,M_T$, such that \nonumber \\
 & $ (\overline{\pt} u_H^m,v_H)+a(\frac{u_H^m + u_H^{m-1}}{2},v_H;\mu)=(f(\widetilde{t}^{m-\frac{1}{2}}),v_H), \ \forall v_H \in V_H \textrm{ and } m = 1,\dots M_T$, \nonumber \\
 & $ u_H(\cdot, 0)\ =\ u_H^0 $, \label{varpara2discCN}
\end{numcases}
where $\widetilde{t}^{m-\frac{1}{2}}= \frac{\widetilde{t}^{m}+ \widetilde{t}^{m-1}}{2}.$

Let us recall a few results from  \cite{thomee2}, on the FEM classical estimates and on both finite difference schemes used. These results will be useful for the proof of Theorem \ref{EllipticEst}.\\

The following estimates are well known to hold with a FEM semi-discretization in space:

\begin{theorem}[Corollary of Theorem 1.2 \cite{thomee2}]
\label{th12}
Let $\Omega$ be a convex polyhedron. Let $u \in W^{1,1}(0,T; H^2(\Omega))$ be the solution of \eqref{heatEq2} with $u^0 \in H^2(\Omega)$ and $u_h$ be the semidiscretized variational form \eqref{varpara1disc}. Then
\begin{equation}
\forall t \geq 0, \quad \norm{u(t)-u_h(t)}_{L^2(\Omega)} \leq C h^2 \Big[ \norm{u^0}_{H^2(\Omega)}+\int_0^t\norm{u_t}_{H^2(\Omega)}\ ds \Big].
\label{Th1Parabol}
\end{equation}
\end{theorem}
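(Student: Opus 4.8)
The plan is to follow the classical Ritz (elliptic) projection argument. The crucial observation is that the operator $P_h^1$ introduced in \eqref{P1opParabol} is exactly the elliptic projection associated with $a(\cdot,\cdot;\mu)$: since $a(w,v;\mu)=\mu(\nabla w,\nabla v)$, the factor $\mu$ cancels in the defining relation, so $a(u-P_h^1 u, v_h;\mu)=0$ for all $v_h\in V_h$. The single analytic input I need is the stationary $L^2$ estimate for this projection on a convex polyhedron, obtained by an Aubin--Nitsche duality argument, namely $\norm{w-P_h^1 w}_{L^2(\Omega)}\leq C h^2\norm{w}_{H^2(\Omega)}$. I would then split the error as $u-u_h=(u-P_h^1 u)+(P_h^1 u - u_h)=:\rho+\theta$, where $\rho$ is controlled immediately by the elliptic estimate, $\norm{\rho(t)}_{L^2(\Omega)}\leq C h^2\norm{u(t)}_{H^2(\Omega)}$, so that the whole remaining task is to estimate the finite element component $\theta(t)\in V_h$.

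Next I would derive the evolution equation for $\theta$. Subtracting the semidiscrete scheme \eqref{varpara1disc} from the weak form \eqref{varpara} tested against $v_h\in V_h$ gives $(\partial_t(u-u_h),v_h)+a(u-u_h,v_h;\mu)=0$; inserting $u-u_h=\rho+\theta$ and using $a(\rho,v_h;\mu)=0$ leaves
\[
(\partial_t\theta,v_h)+a(\theta,v_h;\mu)=-(\partial_t\rho,v_h),\qquad\forall v_h\in V_h.
\]
Because $P_h^1$ is time-independent and linear, $\partial_t\rho=u_t-P_h^1 u_t$, so the elliptic estimate applied to $u_t$ yields $\norm{\partial_t\rho(s)}_{L^2(\Omega)}\leq C h^2\norm{u_t(s)}_{H^2(\Omega)}$. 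Choosing $v_h=\theta$ and using the nonnegativity of $a(\theta,\theta;\mu)=\mu\norm{\nabla\theta}^2$ gives $\tfrac12\tfrac{d}{dt}\norm{\theta}^2\leq\norm{\partial_t\rho}\,\norm{\theta}$, hence $\tfrac{d}{dt}\norm{\theta}\leq\norm{\partial_t\rho}$. Integrating in time and using that the discrete initial datum is $u_h^0=P_h^1 u^0$, so that $\theta(0)=P_h^1 u^0-u_h^0=0$, I obtain $\norm{\theta(t)}_{L^2(\Omega)}\leq\int_0^t\norm{\partial_t\rho(s)}_{L^2(\Omega)}\,ds\leq C h^2\int_0^t\norm{u_t(s)}_{H^2(\Omega)}\,ds$.

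Finally I would combine the two contributions by the triangle inequality and absorb the $\rho$-term using $\norm{u(t)}_{H^2(\Omega)}\leq\norm{u^0}_{H^2(\Omega)}+\int_0^t\norm{u_t(s)}_{H^2(\Omega)}\,ds$ (the fundamental theorem of calculus, legitimate under the hypothesis $u\in W^{1,1}(0,T;H^2(\Omega))$), which reproduces exactly the claimed bound. There is no deep obstacle here, the result being essentially Theorem 1.2 of \cite{thomee2} adapted to the parametric operator. The two points requiring care rather than genuine difficulty are (i) justifying the interchange $\partial_t P_h^1 u=P_h^1 u_t$ and the differentiation under the integral, which is precisely where the regularity $u\in W^{1,1}(0,T;H^2(\Omega))$ enters, and (ii) passing from the inequality for $\norm{\theta}^2$ to the one for $\norm{\theta}$ at instants where $\theta$ vanishes, handled by the standard device of replacing $\norm{\theta}$ with $(\norm{\theta}^2+\varepsilon)^{1/2}$ and letting $\varepsilon\to0$. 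The only genuinely analytic ingredient is the stationary elliptic $L^2$ estimate for $P_h^1$, whose proof rests on $H^2$ elliptic regularity for the Dirichlet Laplacian on the convex polyhedron $\Omega$; everything else is a routine energy estimate on $\theta$.
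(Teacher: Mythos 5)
Your proof is correct and is essentially the argument the paper relies on: the paper states this result as a corollary of Thom\'ee's Theorem 1.2 and, in its own detailed proof of the companion estimate (Theorem \ref{th14bis}), uses exactly the same Ritz-projection splitting $u-u_h=\rho+\theta$ with $P_h^1$, the Aubin--Nitsche $L^2$ bound $\norm{w-P_h^1w}_{L^2(\Omega)}\leq Ch^2\norm{w}_{H^2(\Omega)}$, and an energy estimate on $\theta$ with $\theta(0)=0$ from the choice $u_h^0=P_h^1u^0$. Nothing further is needed.
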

Once fully discretized on a fine mesh with the backward Euler Galerkin method, the estimate \eqref{Th1Parabol} is replaced by the estimate above.
\begin{theorem}[Corollary of Theorem 1.5 \cite{thomee2}]
\label{th15}
Let $\Omega$ be a convex polyhedron. Let $u \in W^{1,1}(0,T; H^2(\Omega)) \cap W^{2,1}(0,T; L^2(\Omega))$ be the solution of \eqref{heatEq2} with $u^0 \in H^2(\Omega)$, let $u^n_h$ be the solution of \eqref{varpara2disc}. If $\norm{u_h^0 - u^0}_{L^2(\Omega)} \leq C h^2 \norm{u^0}_{H^2(\Omega)}$, we have
\begin{equation}
\forall n =0,\dots,N_T \quad , \norm{u(t^n)-u^n_h}_{L^2(\Omega)}\leq Ch^2 \Big[\norm{u^0}_{H^2(\Omega)}+\int_0^{t^n}\norm{u_t}_{H^2(\Omega)}\ ds \Big] + C \ \Delta t_F \int_0^{t^n} \norm{u_{tt}}_{L^2(\Omega)} \ ds.
\label{normL2Parabol}
\end{equation}
\end{theorem}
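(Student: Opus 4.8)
The plan is to follow the classical elliptic (Ritz) projection argument, splitting the error through the operator $P_h^1$ defined in \eqref{P1opParabol}. Writing $e^n := u(t^n) - u_h^n = \rho^n + \theta^n$ with $\rho^n := u(t^n) - P_h^1 u(t^n)$ and $\theta^n := P_h^1 u(t^n) - u_h^n \in V_h$, I would estimate the two contributions separately. The projection part $\rho^n$ is controlled purely by the stationary $L^2$-estimate of the elliptic projection, $\norm{(I - P_h^1)w}_{L^2(\Omega)} \leq Ch^2 \norm{w}_{H^2(\Omega)}$, which holds on a convex polyhedron by the Aubin--Nitsche duality argument together with $H^2$ elliptic regularity. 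Applying it to $w = u(t^n)$ and using $u(t^n) = u^0 + \int_0^{t^n} u_t\, ds$ yields $\norm{\rho^n}_{L^2(\Omega)} \leq Ch^2\big(\norm{u^0}_{H^2(\Omega)} + \int_0^{t^n}\norm{u_t}_{H^2(\Omega)}\, ds\big)$, which already accounts for the full spatial $O(h^2)$ term of \eqref{normL2Parabol}.

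The heart of the proof is the bound on $\theta^n$, for which I would first derive its evolution equation. Testing the continuous weak equation \eqref{varpara} at $t = t^n$ against $v_h \in V_h$, using the defining relation $a(P_h^1 u(t^n), v_h;\mu) = a(u(t^n), v_h;\mu)$ of the projection, and subtracting the fully discrete scheme \eqref{varpara2disc}, one obtains for all $v_h \in V_h$
\begin{equation*}
(\overline{\pt}\, \theta^n, v_h) + a(\theta^n, v_h;\mu) = -(\overline{\pt}\, \rho^n, v_h) + (\omega^n, v_h),
\end{equation*}
where $\omega^n := \overline{\pt}\, u(t^n) - u_t(t^n)$ is the backward-Euler consistency error. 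The point of the splitting is that the troublesome term $a(u(t^n) - u_h^n, v_h;\mu)$ collapses to $a(\theta^n, v_h;\mu)$ thanks to the Galerkin orthogonality built into $P_h^1$, leaving a right-hand side that involves only $\overline{\pt}\,\rho^n$ and the time-truncation term $\omega^n$.

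Next I would run the standard backward-Euler energy estimate: choosing $v_h = \theta^n$, the coercive term $a(\theta^n,\theta^n;\mu) = \mu\norm{\nabla\theta^n}_{L^2(\Omega)}^2 \geq 0$ is discarded, and the monotonicity inequality $(\theta^n - \theta^{n-1}, \theta^n) \geq \norm{\theta^n}_{L^2(\Omega)}\big(\norm{\theta^n}_{L^2(\Omega)} - \norm{\theta^{n-1}}_{L^2(\Omega)}\big)$ combined with Cauchy--Schwarz gives $\norm{\theta^n}_{L^2(\Omega)} - \norm{\theta^{n-1}}_{L^2(\Omega)} \leq \Delta t_F\big(\norm{\overline{\pt}\,\rho^n}_{L^2(\Omega)} + \norm{\omega^n}_{L^2(\Omega)}\big)$. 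Summing this telescoping inequality from $1$ to $n$ leaves $\norm{\theta^0}_{L^2(\Omega)}$ plus two accumulated terms. The initial term is handled by the hypothesis, since $\norm{\theta^0}_{L^2(\Omega)} \leq \norm{(I-P_h^1)u^0}_{L^2(\Omega)} + \norm{u^0 - u_h^0}_{L^2(\Omega)} \leq Ch^2\norm{u^0}_{H^2(\Omega)}$. Rewriting $\overline{\pt}\,\rho^n = \tfrac{1}{\Delta t_F}\int_{t^{n-1}}^{t^n} (I-P_h^1)u_t\, ds$ and $\omega^n = -\tfrac{1}{\Delta t_F}\int_{t^{n-1}}^{t^n}(s - t^{n-1}) u_{tt}(s)\, ds$ as integral Taylor remainders, the two sums telescope into $\Delta t_F \sum_j \norm{\overline{\pt}\,\rho^j}_{L^2(\Omega)} \leq Ch^2\int_0^{t^n}\norm{u_t}_{H^2(\Omega)}\, ds$ and $\Delta t_F\sum_j \norm{\omega^j}_{L^2(\Omega)} \leq C\Delta t_F\int_0^{t^n}\norm{u_{tt}}_{L^2(\Omega)}\, ds$. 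Combining with the bound on $\rho^n$ and the triangle inequality $\norm{e^n}_{L^2(\Omega)} \leq \norm{\rho^n}_{L^2(\Omega)} + \norm{\theta^n}_{L^2(\Omega)}$ reproduces exactly \eqref{normL2Parabol}.

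The step I expect to require the most care is the energy estimate for $\theta^n$ together with the conversion of the difference quotients into genuine time integrals. One must check that the coercivity term may be dropped without degrading the optimal order, that the telescoping is clean so that no discrete Gronwall constant grows with $n$, and that both $\overline{\pt}\,\rho$ and $\omega$ are expressed as \emph{averages} over $[t^{n-1},t^n]$ so that the prefactor $\Delta t_F \sum_j$ cancels and yields $\int_0^{t^n}$ rather than pointwise-in-time quantities. The only ingredient I take as known is the $L^2$ elliptic-projection estimate, which, combined with the regularity $u \in W^{1,1}(0,T;H^2(\Omega)) \cap W^{2,1}(0,T;L^2(\Omega))$, is precisely what makes the two integral remainders finite.
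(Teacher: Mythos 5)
Your proposal is correct and follows essentially the same route the paper relies on: the statement is invoked as a corollary of Th\'eor\`eme 1.5 of Thom\'ee, and the paper's detailed proof of the companion $H^1$ result (Theorem \ref{th14bis}) uses exactly your $\rho$--$\theta$ splitting through $P_h^1$, the same decomposition of the residual into $(P_h^1-I)\overline{\pt}u$ and the consistency error $\overline{\pt}u - u_t$, and explicitly notes that the $L^2$ case is obtained by testing with $\theta^n$ as you do. Your telescoping energy argument and the integral representations of $\overline{\pt}\rho^j$ and $\omega^j$ match the intended argument, so nothing is missing.
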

On the energy error estimate, the following theorems hold.
\begin{theorem}[Corollary of Theorem 1.4 \cite{thomee2}]
\label{th14}
Let $\Omega$ be a convex polyhedron. Let $u \in H^{1}(0,T; H^1(\Omega))\cap L^2(0,T;H^2(\Omega))$ be the solution of \eqref{heatEq2} with $u^0 \in H^2(\Omega)$ and $u_h$ be the semidiscretized variational form \eqref{varpara1disc}. We have
\begin{equation}
\forall t \geq 0, \quad \norm{\nabla u(t)-\nabla u_h(t)}_{L^2(\Omega)} \leq  C(\mu)h \Big[\norm{u^0}_{H^2(\Omega)}+\norm{u(t)}_{H^2(\Omega)}+(\int_0^t \norm{u_t}_{H^1(\Omega)}^2 \ ds)^{1/2} \Big].
\label{Th2Parabol}
\end{equation}
\end{theorem}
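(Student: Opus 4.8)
The plan is to follow the classical Ritz-projection plus energy argument for semidiscrete parabolic Galerkin schemes. The key object is the elliptic projection $R_h := P_h^1$ defined in \eqref{P1opParabol}, for which I would first recall the approximation bounds $\norm{(I-R_h)w}_{L^2(\Omega)} \leq C h^2 \norm{w}_{H^2(\Omega)}$ and $\norm{\nabla(I-R_h)w}_{L^2(\Omega)} \leq C h \norm{w}_{H^2(\Omega)}$ for $w \in H^2(\Omega)\cap H_0^1(\Omega)$, together with the low-regularity estimate $\norm{(I-R_h)w}_{L^2(\Omega)} \leq C h \norm{w}_{H^1(\Omega)}$, which requires only $w \in H_0^1(\Omega)$ and which turns out to be exactly what the weak hypothesis $u_t \in L^2(0,T;H^1(\Omega))$ allows me to exploit.

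I would then split the error as $u - u_h = \rho + \theta$ with $\rho := u - R_h u$ and $\theta := R_h u - u_h \in V_h$. The term $\nabla \rho$ is controlled immediately by the elliptic estimate, giving $\norm{\nabla \rho(t)}_{L^2(\Omega)} \leq C h \norm{u(t)}_{H^2(\Omega)}$, which produces the $\norm{u(t)}_{H^2(\Omega)}$ contribution in the claimed bound. For $\theta$ I would subtract the semidiscrete equation \eqref{varpara1disc} from the variational identity \eqref{varpara} tested against $v_h \in V_h$, and use the defining orthogonality $a(\rho, v_h;\mu)=0$ to obtain the error equation $(\pt_t \theta, v_h) + a(\theta, v_h;\mu) = -(\pt_t \rho, v_h)$ for all $v_h \in V_h$.

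The heart of the energy estimate is the choice of test function $v_h = \pt_t \theta$. This yields $\norm{\pt_t\theta}^2 + \tfrac12 \tfrac{d}{dt}\big(\mu\norm{\nabla\theta}^2\big) = -(\pt_t\rho, \pt_t\theta)$; Cauchy--Schwarz followed by Young's inequality absorbs the $\norm{\pt_t\theta}^2$ term on the left and leaves $\tfrac{d}{dt}\big(\mu\norm{\nabla\theta}^2\big) \leq \norm{\pt_t\rho}^2$. Integrating in time from $0$ to $t$ and using that $\pt_t\rho = (I-R_h)\pt_t u$ (since $R_h$ commutes with time differentiation), I would bound $\int_0^t\norm{\pt_t\rho}^2\,ds \leq C h^2 \int_0^t \norm{u_t}_{H^1(\Omega)}^2\,ds$ via the low-regularity elliptic estimate. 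The initial value $\theta(0) = R_h u^0 - u_h^0$ vanishes for the choice $u_h^0 = P_h^1 u^0$ made in \eqref{varpara1disc} (in general it is bounded by $Ch\norm{u^0}_{H^2(\Omega)}$, which accounts for the $\norm{u^0}_{H^2(\Omega)}$ term in the statement). Collecting the $\tfrac{1}{\mu}$ factor and taking square roots gives $\norm{\nabla\theta(t)}_{L^2(\Omega)} \leq C(\mu) h \big(\int_0^t\norm{u_t}_{H^1(\Omega)}^2\, ds\big)^{1/2}$, and the triangle inequality $\norm{\nabla(u-u_h)} \leq \norm{\nabla\rho} + \norm{\nabla\theta}$ assembles the three terms.

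The main obstacle---and the only place where care is genuinely needed---is matching the weak regularity hypotheses: the gradient estimate cannot afford the optimal $O(h^2)$ $L^2$-bound on $\pt_t\rho$ (that would demand $u_t \in H^2(\Omega)$), so the argument must instead route $\pt_t\rho$ through the suboptimal $O(h)$ estimate valid for merely $H^1(\Omega)$ data, which is precisely why the right-hand side carries $\big(\int_0^t\norm{u_t}_{H^1(\Omega)}^2\, ds\big)^{1/2}$ rather than an $H^2$ norm. A secondary bookkeeping point is tracking the coercivity constant $\mu$ of $a(\cdot,\cdot;\mu)$ through the Young inequality and the final rescaling, so that all $\mu$-dependence is gathered into the single constant $C(\mu)$.
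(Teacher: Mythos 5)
Your proposal is correct and follows essentially the same route as the paper: the paper proves the fully discrete analogue (Theorem \ref{th14bis}) by exactly this Ritz-projection splitting $e = \rho + \theta$ with $P_h^1$ and an energy argument using the (discrete) time derivative of $\theta$ as test function, and for Theorem \ref{th14} itself it defers to the same argument in continuous time following \cite{thomee2}, which is what you have written out. Your observation that $\pt_t\rho$ must be routed through the suboptimal $O(h)$ bound $\norm{(I-P_h^1)w}_{L^2(\Omega)}\leq Ch\norm{w}_{H^1(\Omega)}$ (valid on the convex polyhedron by duality) is precisely what produces the $\big(\int_0^t\norm{u_t}_{H^1(\Omega)}^2\,ds\big)^{1/2}$ term in the stated estimate, so the regularity bookkeeping is handled correctly.
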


The estimate \eqref{Th2Parabol} with the full discretization leads to the following theorem.  
\begin{theorem}
\label{th14bis}
Let $\Omega$ be a convex polyhedron. Let  $u \in H^{2}(0,T; H^1(\Omega)) \cap H^{1}(0,T; H^2(\Omega))$ be the solution of \eqref{heatEq2} with $u^0 \in H^2(\Omega)$, let $u_h^n$ be the fully-discretized solution of the variational form \eqref{varpara1disc}. We have
\begin{align}
 \forall n = 0,\dots N_T, \quad \norm{\nabla u_h^n-\nabla u(t^n)}_{L^2(\Omega)}\leq & C(\mu)h \Big[\norm{u^0}_{H^2(\Omega)}+(\int_0^{t^n}\norm{u_t}_{H^2(\Omega)}^2 \ ds)^{1/2} \Big]\nonumber \\
&+ C(\mu) \ \Delta t_F (\int_0^{t^n} \norm{\nabla u_{tt}}_{L^2(\Omega)}^2\ ds)^{1/2}.
\label{H1fullParabol}
\end{align}
\end{theorem}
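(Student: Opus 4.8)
The plan is to interpose the spatially semi-discrete solution $u_h(t)$ of \eqref{varpara1disc} and split the error by the triangle inequality,
\[
\norm{\nabla u_h^n-\nabla u(t^n)}_{L^2(\Omega)} \le \norm{\nabla u_h^n-\nabla u_h(t^n)}_{L^2(\Omega)} + \norm{\nabla u_h(t^n)-\nabla u(t^n)}_{L^2(\Omega)}.
\]
The second term is exactly the semi-discretization error frozen at $t=t^n$, so Theorem \ref{th14} applied at $t^n$ controls it by $C(\mu)h\big[\norm{u^0}_{H^2(\Omega)}+\norm{u(t^n)}_{H^2(\Omega)}+(\int_0^{t^n}\norm{u_t}_{H^1(\Omega)}^2\,ds)^{1/2}\big]$. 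Using $\norm{u(t^n)}_{H^2(\Omega)}\le \norm{u^0}_{H^2(\Omega)}+\sqrt{t^n}\,(\int_0^{t^n}\norm{u_t}_{H^2(\Omega)}^2\,ds)^{1/2}$ (fundamental theorem of calculus and Cauchy--Schwarz) together with $\norm{u_t}_{H^1}\le\norm{u_t}_{H^2}$, and invoking the regularity $u\in H^1(0,T;H^2(\Omega))$, this collapses into the spatial line $C(\mu)h\big[\norm{u^0}_{H^2(\Omega)}+(\int_0^{t^n}\norm{u_t}_{H^2(\Omega)}^2\,ds)^{1/2}\big]$ of \eqref{H1fullParabol} (the factor $\sqrt{t^n}\le\sqrt T$ being absorbed in the constant). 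Everything then reduces to estimating the purely temporal error $\theta^k:=u_h^k-u_h(t^k)\in V_h$ in the energy norm.

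Subtracting the fully discrete scheme \eqref{varpara2disc} from the semi-discrete equation \eqref{varpara1disc} taken at $t=t^k$ (the source terms $(f(t^k),v_h)$ cancel) yields, for $k=1,\dots,n$, the error identity
\[
(\overline{\pt}\theta^k,v_h)+a(\theta^k,v_h;\mu)=(\tau^k,v_h),\qquad \forall v_h\in V_h,
\]
where $\tau^k:=\pt_t u_h(t^k)-\overline{\pt}u_h(t^k)$ is the backward-Euler consistency error of the semi-discrete trajectory and $\theta^0=0$ because both schemes start from $u_h^0=P_h^1 u^0$. I would test with $v_h=\overline{\pt}\theta^k$, use the elementary inequality $(\nabla\theta^k,\nabla\theta^k-\nabla\theta^{k-1})\ge\tfrac12(\norm{\nabla\theta^k}^2-\norm{\nabla\theta^{k-1}}^2)$ for the $a$-term and Young's inequality on the right so that the $\norm{\overline{\pt}\theta^k}^2$ contributions cancel, then multiply by $\Delta t_F$ and sum over $k$. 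The gradient differences telescope and, with $\theta^0=0$, this gives
\[
\mu\,\norm{\nabla\theta^n}_{L^2(\Omega)}^2\le \Delta t_F\sum_{k=1}^{n}\norm{\tau^k}_{L^2(\Omega)}^2 .
\]
For the consistency error I would use the integral form of the Taylor remainder, $\tau^k=\tfrac{1}{\Delta t_F}\int_{t^{k-1}}^{t^k}(s-t^{k-1})\,\pt_{tt}u_h(s)\,ds$, and Cauchy--Schwarz in time, which give $\norm{\tau^k}_{L^2(\Omega)}^2\le\tfrac{\Delta t_F}{3}\int_{t^{k-1}}^{t^k}\norm{\pt_{tt}u_h(s)}_{L^2(\Omega)}^2\,ds$. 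Summing over $k$ and combining with the energy bound produces
\[
\norm{\nabla\theta^n}_{L^2(\Omega)}\le C(\mu)\,\Delta t_F\Big(\int_0^{t^n}\norm{\pt_{tt}u_h(s)}_{L^2(\Omega)}^2\,ds\Big)^{1/2},
\]
which is the temporal line of \eqref{H1fullParabol}, except that it is expressed through the semi-discrete second time derivative $\pt_{tt}u_h$ rather than the continuous $\nabla u_{tt}$.

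This last replacement is the step I expect to be the main obstacle. Differentiating the linear, time-autonomous problem \eqref{varpara1disc} twice in time shows that $\pt_{tt}u_h$ is itself the spatial Galerkin approximation of $u_{tt}$, so I would write $\norm{\pt_{tt}u_h}_{L^2}\le\norm{u_{tt}}_{L^2}+\norm{u_{tt}-\pt_{tt}u_h}_{L^2}$, bound the second difference by the semi-discrete $L^2$-estimate of Theorem \ref{th12} applied to the $u_{tt}$-problem (an $O(h^2)$ term whose product with $\Delta t_F$ is of higher order and is absorbed into the stated bound), and finally use Poincar\'e's inequality $\norm{u_{tt}}_{L^2}\le C\norm{\nabla u_{tt}}_{L^2}$, legitimate since $u_{tt}(t)\in H_0^1(\Omega)$. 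The delicate points are justifying the double time-differentiation, where the regularity $u\in H^2(0,T;H^1(\Omega))$ is precisely what is needed and what guarantees $\int_0^{t^n}\norm{\nabla u_{tt}}^2\,ds<\infty$, and controlling the mismatch between the discrete and continuous initial second derivatives entering that auxiliary estimate. Alternatively, one may avoid $\pt_{tt}u_h$ altogether by decomposing the full error through the Ritz projection $P_h^1u(t^n)$ of \eqref{P1opParabol}, in which case the time-consistency residual is formed directly from the continuous $u_{tt}$ and only a Poincar\'e step remains.
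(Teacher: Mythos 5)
Your energy mechanics are the same as the paper's: test the discrete error equation with $\overline{\pt}\theta^k$, use $(\nabla\theta^k,\nabla\theta^k-\nabla\theta^{k-1})\ge\tfrac12(\norm{\nabla\theta^k}^2-\norm{\nabla\theta^{k-1}}^2)$ and Young's inequality to cancel $\norm{\overline{\pt}\theta^k}^2$, and telescope. The difference is the choice of comparison function. You split through the spatially semi-discrete solution $u_h(t^n)$, handling the space error by Theorem \ref{th14} and leaving a purely temporal residual $\tau^k$ built from $\pt_{tt}u_h$. The paper instead splits through the Ritz projection $P^1_h u(t^n)$ of \eqref{P1opParabol}: its residual $w^n=w_1^n+w_2^n$ then consists of $(P_h^1-I)\overline{\pt}u(t^n)$, which is $O(h^2)$ in $L^2$ by the approximation property of $P_h^1$, and $\overline{\pt}u(t^n)-u_t(t^n)$, which is the Taylor remainder of the \emph{continuous} solution and is bounded directly by $\Delta t_F(\int_0^{t^n}\norm{u_{tt}}^2_{L^2(\Omega)}ds)^{1/2}$. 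The paper's split therefore never produces a discrete second time derivative, at the modest price of one extra $O(h^2)$ term.

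The step you yourself flag as the obstacle is a genuine gap as you propose to close it. To replace $\int_0^{t^n}\norm{\pt_{tt}u_h}^2_{L^2(\Omega)}\,ds$ by $\int_0^{t^n}\norm{\nabla u_{tt}}^2_{L^2(\Omega)}\,ds$ you differentiate \eqref{varpara1disc} twice in time and apply the $L^2$ semi-discrete estimate \eqref{Th1Parabol} to the problem solved by $u_{tt}$. That auxiliary problem has source $f_{tt}$ and data $u_{tt}(\cdot,0)$, and Theorem \ref{th12} applied to it demands $u_{tt}\in W^{1,1}(0,T;H^2(\Omega))$ and $u_{tt}(\cdot,0)\in H^2(\Omega)$, i.e. control of $u_{ttt}$ and $\Delta u_{tt}$ --- none of which follows from the stated hypotheses $u\in H^{2}(0,T;H^1(\Omega))\cap H^{1}(0,T;H^2(\Omega))$, which only give $\nabla u_{tt}\in L^2(0,T;L^2(\Omega))$. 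So the conversion cannot be carried out at the regularity level of Theorem \ref{th14bis} by that route. The alternative you mention in your last sentence --- decomposing through the Ritz projection so that the time-consistency residual involves only the continuous $u_{tt}$ --- is exactly the paper's proof and is the correct way to close the argument; if you adopt it, the rest of your computation (telescoping bound, Taylor remainder with the $\Delta t_F/3$ constant, Poincar\'e to pass from $\norm{u_{tt}}_{L^2}$ to $\norm{\nabla u_{tt}}_{L^2}$) goes through unchanged.
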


\begin{proof}
  In \cite{thomee2}, these estimates are proven on the solution of the heat equation without a varying diffusion coefficient. These precised estimates can be obtained by following the same steps as in \cite{thomee2}. Let us detail e. g. the proof on the $H^1$ estimate of Theorem \ref{th14bis}.
We first decompose the error with two components $\theta$ and $\rho$ such that
\begin{align}
\forall n = 1,\dots N_T, \ e^n&:=\sqrt{\mu}(\nabla u_h^n-\nabla u(t^n))=\sqrt{\mu}((\nabla u^n_h-\nabla P^1_h u(t^n))+(\nabla P^1_h u(t^n)-\nabla u(t^n))), \nonumber \\
&=\sqrt{\mu}(\nabla \theta^n + \nabla \rho^n).
\label{errorParabol}
\end{align}

\begin{itemize}
\item For the estimate on $\rho^n$, a classical FEM estimate \cite{thomee2,FE} is
\begin{equation*}
\norm{ P_h^1v-v}_{L^2(\Omega)}+h\norm{\nabla (P_h^1v-v)}_{L^2(\Omega)}\leq Ch^2\norm{v}_{H^2(\Omega)},\quad \forall v\in H^2\cap H_0^1,
\end{equation*}
which leads to
\begin{equation*}
\norm{\nabla \rho^n}_{L^2(\Omega)}\leq Ch\norm{u(t^n)}_{H^2(\Omega)}, \ \forall n= 0,\dots N_T,
\end{equation*}
and it leads to , 
\begin{equation}
\norm{\nabla \rho^n}_{L^2(\Omega)}\leq Ch \Big[\norm{u^0}_{H^2(\Omega)}+\int_0^{t^n}\norm{u_t}_{H^2(\Omega)}\ ds \Big].
\label{rhoParabol}
\end{equation}

\item For the estimate on $\theta$, let us consider $ v \in V_h$. Since the operators $P_h^1$ and $\overline{\pt}$ commute, we may write
\begin{equation*}
(\overline{\pt}\theta^n,v)+\mu (\nabla \theta^n,\nabla v)=(\overline{\pt}u^n_h,v)-(P_h^1 \overline{\pt} u(t^n),v) +\mu (\nabla u_h^n,\nabla v)- \mu (\nabla P_h^1 u(t^n),\nabla v).
\end{equation*}
The weak formulations \eqref{varpara} and \eqref{varpara2disc} (fully-discretized solution with the Euler scheme) imply
\begin{align}
(\overline{\pt}\theta^n,v)+\mu (\nabla \theta^n,\nabla v)&=(f,v)-(P_h^1\overline{\pt} u(t^n),v) - \mu (\nabla P_h^1 u(t^n),\nabla v), \nonumber \\
&=(f,v)-(P_h^1\overline{\pt} u(t^n),v) - \mu (\nabla u(t^n), \nabla v), \textrm{ by definition of $P_h^1$}, \nonumber \\
&=(u_t(t^n),v)-(P_h^1\overline{\pt} u(t^n),v).\nonumber
\end{align}
Then, with the triangle inequality, it yields
\begin{align}
(\overline{\pt}\theta^n,v)+\mu (\nabla \theta^n,\nabla v)&=-((P_h^1-I)\overline{\pt}u(t^n),v)-((\overline{\pt}u(t^n)-u_t(t^n)),v) \nonumber \\
&:= -(w_1^n+ w_2^n,v)=-(w^n,v).
\label{teta2Parabol}
\end{align}

Instead of replacing $v$ by $\theta^n$ as in the $L^2$ estimate, here we replace $v$ by $ \overline{\pt} \theta^n,$ thus the equation \eqref{teta2Parabol} takes the form
\begin{equation*}
(\overline{\pt}\theta^n,\overline{\pt}\theta^n)+\mu (\nabla \theta^n,\overline{\pt} \nabla \theta^n) = - (w^n, \overline{\pt} \theta ^n).
\end{equation*}
Therefore, by definition of $\overline{\pt}$ for the Backward Euler discretization,
\begin{equation*}
\underbrace{(\overline{\pt}\theta^n,\overline{\pt}\theta^n)+\mu \frac{\norm{\nabla \theta^n}_{L^2(\Omega)}^2}{\Delta t_F} -\mu \frac{(\nabla \theta^n, \nabla \theta^{n-1})}{\Delta t_F} }_{T_a}= - (w^n, \overline{\pt} \theta ^n).
\end{equation*}
Young's inequality yields
\begin{equation*}
(\nabla \theta^n,\nabla \theta^{n-1})\leq \frac{1}{2}\norm{\nabla \theta^n}_{L^2(\Omega)}^2+  \frac{1}{2}\norm{\nabla \theta^{n-1}}_{L^2(\Omega)}^2,
\end{equation*}
therefore
\begin{equation*}
\norm{\overline{\pt}\theta^n}_{L^2(\Omega)}^2 +\mu \frac{\norm{\nabla \theta^n}_{L^2(\Omega)}^2}{2\Delta t_F} -\mu\frac{\norm{\nabla \theta^{n-1}}^2_{L^2(\Omega)}}{2 \Delta t_F} \ \leq \  T_a  \ \leq \  \frac{1}{2} \norm{w^n}^2_{L^2(\Omega)} + \frac{1}{2}\norm{\overline{\pt} \theta ^n}^2_{L^2(\Omega)},
\end{equation*}
and it results in 
\begin{equation*}
\norm{\overline{\pt}\theta^n}^2_{L^2(\Omega)} +\mu \frac{\norm{\nabla \theta^n}^2_{L^2(\Omega)}}{\Delta t_F}  \leq \mu \frac{\norm{\nabla \theta^{n-1}}^2_{L^2(\Omega)}}{ \Delta t_F} + \norm{w^n}^2_{L^2(\Omega)}.
\end{equation*}
Since $\norm{\overline{\pt}\theta^n}^2_{L^2(\Omega)} \geq 0$, it follows that 
\begin{equation*}
\forall n= 1,\dots,N_T, \ \norm{\nabla \theta^n}_{L^2(\Omega)}^2\leq \norm{\nabla \theta^{n-1}}_{L^2(\Omega)}^2 +\frac{\Delta t_F}{\mu} \norm{ w^n}_{L^2(\Omega)}^2,
\end{equation*}
and we recursively obtain
\begin{equation*}
\forall n= 1,\dots, N_T, \  \norm{\nabla \theta^n}_{L^2(\Omega)}^2\leq \norm{\nabla \theta^0}_{L^2(\Omega)}^2 + \frac{\Delta t_F}{\mu} \sum_{j=1}^n \norm{ w^j}_{L^2(\Omega)}^2.
\end{equation*}
By definition of $\theta$ (and $P_h^1$),
\begin{align*}
\norm{\nabla \theta^0}_{L^2(\Omega)}&=\norm{\nabla u^0_h-\nabla P_h^1 (u^0)}_{L^2(\Omega)} \leq \norm{\nabla u_h^0- \nabla u(t^0)}_{L^2(\Omega)}+\norm{\nabla u^0-\nabla P_h^1 (u^0)}_{L^2(\Omega)} \nonumber \\
&\leq   \norm{\nabla u_h^0- \nabla u^0}_{L^2(\Omega)}+ Ch\norm{u^0}_{H^2(\Omega)}.
\end{align*}

It remains to estimate the $L^2$ norm of the $w^j$, defined by \eqref{teta2Parabol}.

\begin{itemize}
\item Let us first consider the construction for $w_1$
  \begin{align}
w_1^j&=(P_h^1-I)\overline{\pt}u(t^j) \nonumber \\
&=\frac{1}{\Delta t_F}(P_h^1-I)\int_{t^{j-1}}^{t^j}u_t \ ds, \nonumber \\
&=\frac{1}{\Delta t_F}\int_{t^{j-1}}^{t^j} (P_h^1-I) u_t \ ds, \textrm{ since $P_h^1$ and the time integral commute}. \nonumber
\end{align}
  Thus, from Hölder's inequality,
  \begin{align}
  \frac{\Delta t_F}{\mu} \sum_{j=1}^n \norm{ w^j_1}_{L^2(\Omega)}^2 &\leq \frac{\Delta t_F}{\mu} \sum_{j=1}^n \int_{\Omega} \big[\frac{1}{\Delta t_F^2}\ \int_{t^{j-1}}^{t^j} ((P_h^1-I) u_t)^2 \ ds \ \Delta t_F \big] \nonumber\\
  &\leq \frac{1}{\mu} \sum_{j=1}^n \int_{t^{j-1}}^{t^j} \norm{(P_h^1-I) u_t}^2_{L^2(\Omega)} \ ds, \nonumber \\
&\leq \frac{C}{\mu}h^4 \sum_{j=1}^n \int_{t^{j-1}}^{t^j} \norm{u_t}_{H^2(\Omega)}^2, \textrm{ by the definition of ,}P_h^1 \nonumber \\
&\leq \frac{C}{\mu}h^4 \int_0^{t^n} \norm{u_t}_{H^2(\Omega)}^2 \ ds. 
\end{align}
\item To estimate the $L^2$ norm of the $w_2$, we write
\begin{align*}
w_2^j&=\frac{1}{\Delta t_F} (u(t^j)-u(t^{j-1}))-u_t(t^j), \nonumber\\
&=-\frac{1}{\Delta t_F}\int_{t^{j-1}}^{t^j} (s-t^{j-1})u_{tt}(s) \ ds, \nonumber
\end{align*}

such that we end up with 
\begin{equation*}
  \frac{\Delta t_F}{\mu} \sum_{j=1}^n \norm{ w_2^j}_{L^2(\Omega)}^2 \leq \frac{1}{\mu} \sum_{j=1}^n\norm{ \int_{t^{j-1}}^{t^j} (s-t^{j-1})  u_{tt}(s) \ ds}^2_{L^2(\Omega)} \leq \frac{ \Delta t_F^2}{\mu} \int_0^{t^n} \norm{  u_{tt}}_{L^2(\Omega)}^2 \ ds.
\end{equation*}
Combining the estimates on $\rho$ and $\theta$ concludes the proof.
\end{itemize}
\end{itemize}
\end{proof}
Finally, using the Crank-Nicolson scheme, we can recover the estimate in $H^2$ and $\Delta t_G^2$ in the $L^2$ norm

\begin{theorem}[Corollary of Theorem 1.6 \cite{thomee2}]
  \label{thCrankNicolson}
  Let $\Omega$ be a convex polyhedron. Let $u \in H^{2}(0,T; H^2(\Omega))\cap H^3(0,T;L^2(\Omega))$ be the solution of \eqref{heatEq2} with $u^0 \in H^2(\Omega)$. Let $u^m_H$ be the solution given by \eqref{varpara2discCN}, associated to Crank-Nicolson discretization on the time and spatial coarse grids. Let $\norm{u^0_H - u^0}_{L^2(\Omega)} \leq C H^2 \norm{u^0}_{H^2(\Omega)}$, then
\begin{align}
  &\forall m = 0,\dots,M_T, \quad \norm{u(\widetilde{t}^m)-u^m_H}_{L^2(\Omega)}\leq C \ H^2 \Big[\norm{u^0}_{H^2(\Omega)}+\int_0^{\widetilde{t}^m}\norm{u_t}_{H^2(\Omega)}\ ds\Big] \notag \\
  & \quad \quad \quad + C\ \Delta t_G^2 \big[(\int_0^{\widetilde{t}^m} \norm{u_{ttt} \ ds}_{L^2(\Omega)}^2)^{1/2} + (\int_0^{\widetilde{t}^m} \norm{\Delta u_{tt}}_{L^2(\Omega)}^2 \ ds)^{1/2} \big].
\label{th16}
\end{align}
\end{theorem}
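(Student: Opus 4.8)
The plan is to follow the same route as the proof of Theorem \ref{th14bis}, but adapted to the centred Crank--Nicolson scheme so that the time truncation becomes genuinely second order. First I would split the error at each coarse time level using the elliptic (Ritz) projection $P_H^1$ from \eqref{P1opParabol}, exactly as in \eqref{errorParabol}:
\begin{equation*}
u_H^m - u(\widetilde{t}^m) = \underbrace{\big(u_H^m - P_H^1 u(\widetilde{t}^m)\big)}_{=: \theta^m} + \underbrace{\big(P_H^1 u(\widetilde{t}^m) - u(\widetilde{t}^m)\big)}_{=: \rho^m}.
\end{equation*}
The projection part $\rho^m$ is handled as in \eqref{rhoParabol}: the standard $\mathbb{P}_1$ bound gives $\norm{\rho^m}_{L^2(\Omega)} \leq C H^2 \norm{u(\widetilde{t}^m)}_{H^2(\Omega)}$, and writing $u(\widetilde{t}^m) = u^0 + \int_0^{\widetilde{t}^m} u_t\,ds$ produces the first (spatial) bracket of \eqref{th16}. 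All the remaining work is in $\theta^m \in V_H$.

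Second, I would derive the discrete error equation for $\theta^m$. Subtracting the Crank--Nicolson scheme \eqref{varpara2discCN} from the continuous weak form \eqref{varpara} evaluated at the midpoint $\widetilde{t}^{m-\frac{1}{2}}$, using that $P_H^1$ commutes with $\overline{\pt}$ and reproduces the bilinear form ($a(P_H^1 w, v_H;\mu) = a(w,v_H;\mu)$ for $v_H \in V_H$), I expect to obtain, for all $v_H\in V_H$,
\begin{equation*}
(\overline{\pt}\theta^m, v_H) + a\Big(\tfrac{\theta^m + \theta^{m-1}}{2}, v_H;\mu\Big) = -(\omega^m, v_H), \qquad \omega^m = \omega_1^m + \omega_2^m + \omega_3^m,
\end{equation*}
with three consistency contributions: $\omega_1^m = (P_H^1 - I)\overline{\pt}u(\widetilde{t}^m)$ (projection error of the difference quotient, order $H^2$), $\omega_2^m = \overline{\pt}u(\widetilde{t}^m) - u_t(\widetilde{t}^{m-\frac{1}{2}})$ (the centred time-truncation error), and $\omega_3^m = -\mu\,\Delta\big(\tfrac{u(\widetilde{t}^m)+u(\widetilde{t}^{m-1})}{2} - u(\widetilde{t}^{m-\frac{1}{2}})\big)$, which appears because the scheme averages the stiffness term while the continuous equation is taken at the midpoint; the integration by parts $a(w,v_H;\mu) = -\mu(\Delta w, v_H)$, valid since the relevant differences lie in $H^2\cap H_0^1$, is what converts this term into the $\Delta u_{tt}$ contribution.

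Third comes the energy step. In contrast to the backward-Euler $H^1$ estimate, where one tested with $\overline{\pt}\theta^n$, here I would test with the average $v_H = \tfrac{\theta^m+\theta^{m-1}}{2}$. Then $a(\tfrac{\theta^m+\theta^{m-1}}{2}, \tfrac{\theta^m+\theta^{m-1}}{2};\mu) = \mu\norm{\nabla(\tfrac{\theta^m+\theta^{m-1}}{2})}_{L^2(\Omega)}^2 \geq 0$, and the Crank--Nicolson identity $(\overline{\pt}\theta^m, \tfrac{\theta^m+\theta^{m-1}}{2}) = \tfrac{1}{2\Delta t_G}(\norm{\theta^m}^2 - \norm{\theta^{m-1}}^2)$ give, after Cauchy--Schwarz on the right and division by $\tfrac{1}{2}(\norm{\theta^m}+\norm{\theta^{m-1}})$, the first-order recursion $\norm{\theta^m} \leq \norm{\theta^{m-1}} + \Delta t_G\norm{\omega^m}$, hence
\begin{equation*}
\norm{\theta^m}_{L^2(\Omega)} \leq \norm{\theta^0}_{L^2(\Omega)} + \Delta t_G \sum_{j=1}^m \norm{\omega^j}_{L^2(\Omega)},
\end{equation*}
with $\norm{\theta^0}_{L^2(\Omega)} \leq CH^2\norm{u^0}_{H^2(\Omega)}$ from the hypothesis on $u_H^0$ combined with the projection estimate.

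Finally I would bound the three pieces of $\Delta t_G\sum_j\norm{\omega^j}$. The term $\omega_1$ is treated exactly as $w_1$ in the proof of Theorem \ref{th14bis} and yields $CH^2\int_0^{\widetilde{t}^m}\norm{u_t}_{H^2(\Omega)}\,ds$. For $\omega_2$ and $\omega_3$, Taylor expansion about the midpoint $\widetilde{t}^{m-\frac{1}{2}}$ makes the first-order terms cancel, leaving integral remainders of order $\Delta t_G^2$ weighted by $u_{ttt}$ and $\Delta u_{tt}$ respectively; two successive Cauchy--Schwarz inequalities (first in $s$ over each subinterval, then discretely in $j$, using $m\,\Delta t_G = \widetilde{t}^m \leq T$) turn the $\ell^1$ sums into the square-integral norms $(\int_0^{\widetilde{t}^m}\norm{u_{ttt}}_{L^2(\Omega)}^2\,ds)^{1/2}$ and $(\int_0^{\widetilde{t}^m}\norm{\Delta u_{tt}}_{L^2(\Omega)}^2\,ds)^{1/2}$ of \eqref{th16}. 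Combining with $\norm{\rho^m}$ closes the estimate. The main obstacle is precisely the midpoint-versus-average stiffness term $\omega_3$: showing it is genuinely second order in time and controlling it in $L^2$ forces the integration by parts and hence the extra elliptic regularity ($u \in H^2(0,T;H^2(\Omega))$, so that $\Delta u_{tt} \in L^2$), which is exactly the assumption demanded here and which is absent from the simpler backward-Euler analysis.
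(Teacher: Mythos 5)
Your proposal is correct and follows exactly the route the paper intends: the paper does not write this proof out but defers to Thom\'ee's Theorem 1.6, noting that the estimate follows by the same steps as the detailed template given for Theorem \ref{th14bis}, i.e.\ the Ritz-projection splitting into $\theta$ and $\rho$, the discrete error equation with the three consistency terms, and the energy argument. Your adaptation to Crank--Nicolson (testing with the average $\tfrac{\theta^m+\theta^{m-1}}{2}$, Taylor expansion about the midpoint, and the integration by parts producing the $\Delta u_{tt}$ term) is the standard and correct way to carry this out.
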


Now, let $\widetilde{u_H}^n$ be the quadratic interpolation in time of the coarse solution at time $t^n \in I_m=[\widetilde{t}^{m-1},\widetilde{t}^m]$ defined on $[\widetilde{t}^{m-2},\widetilde{t}^m]$ from the values $u_H^{m-2}, u_H^{m-1},$ and  $u_H^m$,\ for all $m=2,\dots, M_T$.
To this purpose, we define the parabola on $[\wt^{m-2},\wt^m]$ with the values $u_H^{m-2}, u_H^{m-1}, u_H^m$: \\
    For $m\geq 2$, $\forall n \in I_m=[\widetilde{t}^{m-1},\widetilde{t}^m]$,
    \begin{align}
      \label{parabola}
      \widetilde{u_H}^n(\mu)=\frac{u_H^{m-2}(\mu)}{(\widetilde{t}^m - \wt^{m-2})(\wt^{m-2}-\wt^{m-1})}\big[ -(t^n)^2 + (\wt^{m-1} + \wt^m)t^n - t^{m-1}t^m \big] \nonumber \\
      + \frac{u_H^{m-1}(\mu)}{(\widetilde{t}^{m-2} - \wt^{m-1})(\wt^{m-1}-\wt^{m})}\big[ -(t^n)^2 + (\wt^{m} + \wt^{m-2})t^n - t^{m}t^{m-2} \big] \nonumber \\
    +  \frac{u_H^{m}(\mu)}{(\widetilde{t}^{m-1} - \wt^{m})(\wt^{m}-\wt^{m-2})}\big[ -(t^n)^2 + (\wt^{m-2} + \wt^{m-1})t^n - t^{m-2}t^{m-1} \big].
     \end{align}
    For $t^n \in I_1=[\wt^0,\wt^1]$, we use the same parabola defined by the values $u_H^0, u_H^{1}, u_H^2$ as the one used over $[\wt^1,\wt^2]$. Note that we choose this interpolation in order to keep an approximation of order 2 in time $\Delta t_G$ (it works also with other quadratic interpolations).
With this interpolated approximation, we have the following result.
\begin{corollary}[of Theorem \ref{thCrankNicolson}]
  Under the assumptions of Theorem \ref{thCrankNicolson}, let $\widetilde{u_H}^n$ be the quadratic interpolation in time of the coarse solution, defined above, then
\begin{align}
  &\forall n = 0,\dots, N_T, \quad \norm{u(\widetilde{t}^n)-\widetilde{u_H}^n}_{L^2(\Omega)}\leq C(\mu)H^2 \Big[\norm{u^0}_{H^2(\Omega)}+\int_0^{\widetilde{t}^m}\norm{u_t}_{H^2(\Omega)}\ ds\Big] \notag \\
  & \quad \quad \quad + C(\mu)\Delta t_G^2 [(\int_0^{\widetilde{t}^m} \norm{u_{ttt} \ ds}_{L^2(\Omega)}^2)^{1/2} + (\int_0^{\widetilde{t}^m} \norm{\Delta u_{tt}}_{L^2(\Omega)}^2 \ ds)^{1/2}].
 \label{coro}
\end{align}
\end{corollary}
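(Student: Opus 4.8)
The plan is to exploit the fact that $\widetilde{u_H}^n$ is, by \eqref{parabola}, nothing but the quadratic Lagrange interpolate in time of the three coarse nodal values $u_H^{m-2},u_H^{m-1},u_H^m$, for which Theorem \ref{thCrankNicolson} already provides the desired $L^2(\Omega)$ bound. Denoting by $\Pi_m$ the quadratic interpolation operator in time associated with the nodes $\widetilde{t}^{m-2},\widetilde{t}^{m-1},\widetilde{t}^m$, so that $\widetilde{u_H}^n=\Pi_m[u_H^{m-2},u_H^{m-1},u_H^m](t^n)$, I would first split the error at $t^n\in I_m$ as
\begin{equation*}
u(t^n)-\widetilde{u_H}^n=\big(u(t^n)-\Pi_m u(t^n)\big)+\Pi_m\big(u-u_H\big)(t^n),
\end{equation*}
where $\Pi_m u$ is the quadratic interpolate built from the exact coarse nodal values $u(\widetilde{t}^{m-2}),u(\widetilde{t}^{m-1}),u(\widetilde{t}^m)$. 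The first term is a pure time-interpolation (consistency) error, and the second is the interpolate of the three nodal errors, to which Theorem \ref{thCrankNicolson} applies directly.

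For the consistency term I would use the standard remainder formula for quadratic interpolation in Peano-kernel form, $u(t^n)-\Pi_m u(t^n)=\int_{\widetilde{t}^{m-2}}^{\widetilde{t}^m}K_m(t^n,s)\,u_{ttt}(\cdot,s)\,ds$, with a kernel satisfying $|K_m(t^n,\cdot)|\le C\,\Delta t_G^2$ on an interval of length $2\,\Delta t_G$. Taking the $L^2(\Omega)$ norm in space and applying Cauchy--Schwarz in $s$ then yields $\norm{u(t^n)-\Pi_m u(t^n)}_{L^2(\Omega)}\le C\,\Delta t_G^2\big(\int_0^{\widetilde{t}^m}\norm{u_{ttt}}_{L^2(\Omega)}^2\,ds\big)^{1/2}$, which is exactly of the form of the second group of terms on the right-hand side of \eqref{coro} (in fact it is of higher order in $\Delta t_G$ and could simply be absorbed). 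Since the regularity $u\in H^3(0,T;L^2(\Omega))$ is already assumed in Theorem \ref{thCrankNicolson}, this term is controlled.

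For the propagation term, the three coefficients appearing in \eqref{parabola} are precisely the Lagrange basis polynomials $L_j^{(m)}$, so by the triangle inequality
\begin{equation*}
\norm{\Pi_m(u-u_H)(t^n)}_{L^2(\Omega)}\le\sum_{j=0}^{2}|L_j^{(m)}(t^n)|\,\norm{u(\widetilde{t}^{m-j})-u_H^{m-j}}_{L^2(\Omega)},
\end{equation*}
and each nodal error is bounded by \eqref{th16}. The point that needs care, and the main obstacle, is to show that the coefficients $L_j^{(m)}(t^n)$ are bounded by a constant independent of $\Delta t_G$ and of $m$ for $t^n\in I_m$. On the uniform grid this follows from the affine change of variables $t=\widetilde{t}^{m-2}+\sigma\,\Delta t_G$, under which each $L_j^{(m)}$ becomes a fixed quadratic in $\sigma$ independent of $m$ and of $\Delta t_G$; its supremum over the relevant range (namely $\sigma\in[1,2]$ for $I_m$, and $\sigma\in[0,1]$ for the extrapolated interval $I_1$, which is handled by the same parabola) is a universal constant. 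Bounding every nodal term by the largest one, whose right-hand side in \eqref{th16} already contains integrals over $[0,\widetilde{t}^m]$, and collecting the two contributions then yields \eqref{coro} with a suitably enlarged constant $C(\mu)$.
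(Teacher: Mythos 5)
Your proof is correct and is essentially the argument the paper intends: the corollary is stated there without proof, and the natural route is exactly your splitting into a quadratic-in-time interpolation (consistency) error, controlled via the Peano kernel and the $H^3(0,T;L^2(\Omega))$ regularity already assumed in Theorem \ref{thCrankNicolson}, plus the Lagrange interpolate of the nodal Crank--Nicolson errors, controlled by \eqref{th16} together with the uniform boundedness of the Lagrange basis polynomials on the uniform coarse time grid. One cosmetic remark: for $t^n\in I_1$ the evaluation point still lies inside the node interval $[\widetilde{t}^0,\widetilde{t}^2]$, so this is interpolation rather than extrapolation, which only makes your uniform bound on the basis polynomials (with $\sigma\in[0,2]$) easier.
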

Let us proceed with the NIRB algorithm description in the context of parabolic equations. 
\section{The Non-Intrusive Reduced Basis method (NIRB) in the context of parabolic equations.}
\label{NIRB}
\subsection{Main steps.}

This section describes the main steps of the two-grids method algorithm in the context of parabolic equations, and especially, how to define the RB using a POD-Greedy algorithm \cite{haasdonk2008reduced,haasdonk2013convergence,knezevic2010certified}.
Indeed, for evolution PDE's, a single solution associated with a parameter $\mu \in \mathcal{G}$ is made up of a sequence of potentially hundreds of snapshots over time (each snapshot being an HF finite element approximation in space at a time $t^n,\ n=0,\dots,N_T$).
As a result, each greedy step in the greedy algorithm is combined with a temporal compression step performed by a POD.
Let us go over the different steps of our offline-online decomposition.
The first three points are completed offline, while the remaining points are executed online.
\begin{itemize}
  \item ``Offline step''\\
\begin{enumerate}
\item From the training parameters $(\mu_i)_{i \in \{1,\dots , train\}}$, we compute fine snapshots $\{\uu_{h}^n(\mu_i)\}_{i \in \{1,\dots N\}}$ with the HF solver (solving problem \eqref{varpara2disc}). We define $\mathcal{G}_{train}= \underset{i \in \{1,\dots, train\}}{\cup} \mu_i$. 
\item We generate the RB functions (time-independent) $(\Phi^{h}_{i})_{i=1,\dots,N}$ through a POD-Greedy algorithm from the above snapshots, as presented in algorithm \ref{algoPODGreedy} below (or a full Greedy algorithm \ref{algogreedy}). \\

\begin{algorithm}[H]
  \caption{POD-Greedy algorithm } \label{algoPODGreedy}
  \hspace*{\algorithmicindent} \textbf{Input:} $N_{max}$, \ $\{\uu_h^n(\mu_1),\cdots,\uu_h^n(\mu_{N_{train}}) \textrm{ with } \mu_i  \in \mathcal{G}_{train}, \ n=0,\dots, N_T \}.$ \\
  \hspace*{\algorithmicindent} \textbf{Output:} Reduced basis $\{\Phi_1^h,\cdots,\Phi_{N}^h\}, N \leq N_{max}.$ \\
  \begin{algorithmic}
    \State Choose $\mu_1=\underset{ \mu \in \mathcal{G}_{train}}{\textrm{arg} \ \max}\  \norm{\uu_h^n(\mu)}_{l^{\infty}(0,\dots,N_T;\ L^2(\Omega))}$ ,
    \State Then produce the modes $\{ \Phi_1^h, \cdots, \Phi_{N_1}^h\} $ through a POD on $\{\uu_h^n(\mu_1), \ n=0,\dots,N_T \}$.
     \State Set $ \mathcal{G}_1={\mu_1}$ and $X_h^1= \spann \{ \Phi_1^h, \cdots, \Phi_{N_1}^h \}.$ 
 \While{$\overset{N}{\underset{k=2}{\sum}} N_k<N_{max}$} 
 \State  Choose $\mu_k=$ arg $\underset{\mu \in \mathcal{G}_{train}\backslash \mathcal{G}_{k-1}}{ \textrm{max}} \frac{\norm{ \uu_h^n ( \mu ) - P^{k-1} ( \uu_h^n(\mu) ) }_{l^{\infty}(0,\dots,N_T;L^2(\Omega))}}{\norm{\uu_h^n(\mu)}_{l^{\infty}(0,\dots,N_T;L^2(\Omega))}}$, with $P^{k-1}(\uu_h^n(\mu)):=\overset{N_{k-1}}{\underset{i=1}{\sum}} (\uu_h^n(\mu),\Phi_i^h)_{L^2} \Phi_i^h.$
 \State Then produce the modes $\{\Phi^h_{N_{k-1}+1}, \dots, \Phi^h_{N_k}\}$ through a POD on $\{\uu_h^n(\mu_k) - P^{k-1} ( \uu_h^n(\mu) ), \ n=0,\dots, N_T\}$.
   \State Set $ \mathcal{G}_k=\mathcal{G}_{k-1}\cup{\mu_k}$ and $X_h^k=X_h^{k-1} \oplus \spann \{\Phi^h_{k-1}, \cdots, \Phi^h_{N_k} \}.$ 
  \EndWhile
  \end{algorithmic}
\end{algorithm}

\begin{remark}
  In the following standard greedy algorithm, a tolerance treshold is used instead of a priori given number of basis functions. \\
  \begin{algorithm}[H]
      \caption{Greedy algorithm }  \label{algogreedy}
      \hspace*{\algorithmicindent} \textbf{Input:} $tol$, \ $\{\uu_h^n(\mu_1),\cdots,\uu_h^n(\mu_{N_{train}}) \textrm{ with } \mu_i  \in \mathcal{G}_{train}, \ n=0,\dots, N_T \}.$ \\
      \hspace*{\algorithmicindent} \textbf{Output:} Reduced basis $\{\Phi_1^h,\cdots,\Phi_{N}^h\}$ \\
    \begin{algorithmic}
      \State Choose $\mu_1,n_1=\underset{ \mu \in \mathcal{G}_{train},\  n \in \{ 0,\dots,N_T \}}{\textrm{arg} \max} \norm{\uu_h^n(\mu)}_{L^2(\Omega)}$ ,
      \State   Set $\Phi_1^h=\frac{\uu_h^{n_1}(\mu_1)}{\norm{\uu_h^{n_1}(\mu_1)}_{L^2}}$ \; 
\State  Set $ \mathcal{G}_1=\{\mu_1, n_1 \}$ and $X_h^1=\spann\{ \Phi_1^h \}$.
  \For{$k=2$ to $N$}:
  \State  $\mu_k,n_k=$ arg $ \underset{(\mu,\ n) \in (\mathcal{G}_{train}\times \{0,\dots,N_T\}) \backslash \mathcal{G}_{k-1}}{ \textrm{max}} \norm{ \uu_h^n ( \mu ) - P^{k-1} ( \uu_h^n(\mu) ) }_{L^2}$,  with $P^{k-1}$ defined as in POD-Greedy algorithm.
  \State Compute $\widetilde{\Phi_k^h}=\uu_h^{n_k}(\mu_k)-\overset{k-1}{\underset{i=1}{\sum}}(\uu_h^n(\mu_k),\Phi_i^h)_{L^2(\Omega)} \Phi_i^h$ and set $\Phi_k^h=\frac{\widetilde{\Phi^h_k}}{\norm{\widetilde{\Phi^h_k}}_{L^2(\Omega)}}$ 
    \State Set $\mathcal{G}_k=\mathcal{G}_{k-1}\cup \{ \mu_k\}$ and $X_h^k=X_h^{k-1} \oplus \spann\{\Phi^h_k \} $ 
   \State    Stop when $\norm{\uu_h^n(\mu)-P^{k-1}(\uu_h^n(\mu))}_{L^2}\leq tol, \ \forall \mu \in \mathcal{G}_{train},\ \forall n= 0,\dots, N_T. $ 
  \EndFor
  \end{algorithmic}
\end{algorithm}
Note that the greedy algorithm is generally less expensive (thanks to a-posteriori error estimates for stationnary problems). Yet, for time dependent problems, the POD-greedy is more reasonable when the snapshots are computed for all time steps. 
\end{remark}
 \begin{remark}
 The term 
 \begin{equation}
 \label{greedyestimateeq}
 \norm{ \uu_h^n( \mu ) - P^{k-1} ( \uu_h^n(\mu) ) }_{L^2(\Omega)}
 \end{equation}
 can be calculated either with a set of training snapshots as presented in \ref{algogreedy} or evaluated with an a-posteriori estimate. Since at each step $k$, all sets added in the basis are in the orthogonal complement of $X_h^{k-1}$, it yields an $L^2$ orthogonal basis without further processing.
 In practice, the algorithm is halted with a stopping criterion such as an error threshold or a maximum number of basis functions to generate.
 \end{remark}
 
 Then, we solve the following eigenvalue problem:
    \begin{numcases}
      \strut \textrm{Find } \Phi^{h} \in X_h^{N}, \textrm{ and } \lambda \in \mathbb{R} \textrm{ such that: }    \nonumber\\
        \forall \vv \in X_h^{N}, \int_{\Omega} \nabla \Phi^{h} \cdot \nabla \vv \ d\xx= \lambda \int_{\Omega} \Phi^{h} \cdot \vv \ d\xx, \label{orthohuhu} 
    \end{numcases} 
    where $X_h^{N} = Span\{ \Phi^h_1, \dots ,  \Phi^h_N\}$.
   We get an increasing sequence of eigenvalues $\lambda_i$, and orthogonal eigenfunctions $(\Phi^{h}_{i})_{i=1,\cdots,N}$, which do not depend on time, orthonormalized in $L^2(\Omega)$ and orthogonalized in $H^1(\Omega)$. Note that with Gram-Schmidt procedure, we only obtain an $L^2$-orthonormalized RB. 
  \item For the rectification post-treatment, we generate the equivalent coarse snapshots and the rectification matrix with algorithm \ref{rectiffz12}. The coarse snapshots, which have the same parameters as for the HF snapshots, are quadratically interpolated in time \eqref{parabola}. We ressort to the following algorithm.
 \begin{algorithm}[H]
   \caption{Offline rectification post-treatment}  \label{rectiffz12}
   \hspace*{\algorithmicindent} \textbf{Input:}$\{\uu_h^n(\mu_1) \cdots\uu_h^n(\mu_{N_{train}}), \textrm{ with } \mu_i  \in \mathcal{G}_{train}, \ n=0,\dots, N_T \}$ and with the same parameter $\{\uu_H^m(\mu_1),\cdots,\uu_H^m(\mu_{N_{train}}), \textrm{ with } \mu_i  \in \mathcal{G}_{train} \subset \mathcal{G}, \ m=0,\dots, M_T \}$ \\
  RB $\{\Phi^h_i\}_{i=1,\dots,N}.$ \\
  \hspace*{\algorithmicindent} \textbf{Output:}Rectification matrix $R_{i,j}^n, 1\leq i,j\leq N, \ n=0,\dots, N_T. $ \\
   \begin{algorithmic}
  \State Realize the quadratic interpolation of the coarse snapshots in time, denoted $\widetilde{\uu_H}^n,\ \ n=0,\dots, N_T$ with \eqref{parabola}. 
  \For{$ n=0,\dots, N_T$}
  \State Calculate the fine and coarse coefficients
  \State $\forall i=1,\cdots,N, \textrm{ and }  \forall \mu_k \in  \mathcal{G}_{train},\  A^n_{k,i}=\int_{\Omega} \widetilde{\uu_H}^n(\mu_k) \cdot \Phi_i^h\ d\xx,\quad \textrm{and }  B^n_{k,i}=\int_{\Omega} \uu_h^n(\mu_k) \cdot \Phi_i^h\ d\xx$, 
\State For $ i=1, \cdots,N,$ set $\mathbf{R}_i^n=((\mathbf{A}^n)^T\mathbf{A}^n+\delta \mathbf{I}_{N})^{-1}(\mathbf{A}^n)^T \mathbf{B}^n_i.$ 
  \EndFor
  \end{algorithmic}
 \end{algorithm}
 \begin{remark}
   Every time step has its own rectification matrix. Indeed, in our experiments, the results obtained with a global rectification matrix were less accurate. Because we have several time steps for each parameter in $\mathcal{G}_{train}$, $Ntrain \leq N$ in our context. Hence, $ \forall n\in \{1,\dots,N_T\},\ \mathbf{A}^n \in \mathbb{R}^{Ntrain \times N}$ is a rectangular ``flat'' matrix, and $(\mathbf{A}^n)^T\mathbf{A}^n$ is not invertible and requires the parameter $\delta$ for the inversion. In previous studies, the parameter $\delta$ was used only as a regularization parameter. \\
  We also remark that with the rectification post-treatment, the standard greedy algorithm \ref{algogreedy} may leads to more accurate approximations. It comes from the fact that the coefficients of the matrix are directly derived from the snapshots in that case. 
   \end{remark}
 \end{enumerate}
\item ``Online step''\\
  \begin{enumerate}
    \setcounter{enumi}{3}
  \item Now for the online part, we solve the problem \eqref{heatEq2} on the coarse mesh $\mathcal{T}_H$ for a new parameter $\mu \in \mathcal{G}$ at each time step $m=0,\dots, M_T$. 
  \item   We quadratically interpolate in time the coarse solution on the fine time grid with \eqref{parabola}.
    \item  Then, we linearly interpolate $\widetilde{u_H}^n(\mu)$ on the fine mesh in order to compute the $L^2$-inner product with the basis functions. The approximation used in the two-grid method is
      \begin{equation}
        \label{NIRBapproximation0}
     \textrm{For }n=0,\dots, N_T, \quad   u_{Hh}^{N,n}(\mu):= \overset{N}{\underset{i=1}{\sum}}(\widetilde{u_H}^n(\mu),\Phi^{h}_{i})\ \Phi^{h}_{i},
      \end{equation}
      and with the rectification post-treatment step \cite{chakirrect,VF}, it becomes
         \begin{equation}
        \label{NIRBapproximation0rect}
       R^n[u_{Hh}^{N}](\mu):= \overset{N}{\underset{i=1}{\sum}}R_{ij}^n\ (\widetilde{u_H}^n(\mu),\Phi^{h}_{j})\ \Phi^{h}_{i},
         \end{equation}
         where $R^n$ is the rectification matrix at time $t^n$ (see algorithm \ref{rectiffz12}).
\end{enumerate}
 \end{itemize}

 In the next section, we prove the optimal error in $L^{\infty}(0,T;H^1(\Omega))$.
\section{NIRB error estimate with parabolic equations}
\label{NIRBproof}

\begin{paragraph}{Main result}
Our main result is the following theorem.
\begin{theorem}{(NIRB error estimate for parabolic equations.)}
\label{EllipticEst}
Let us consider the problem \ref{heatEq2} with its exact solution $u(\xx, t; \mu)$, and the full discretized solution $u_h^n(\xx;\mu)$ to the problem \ref{varpara2disc}. 
Let $(\Phi^{h}_i)_{i=1,\dots,N}$ be the $L^2$-orthonormalized and $H^1$-orthogonalized RB generated with the POD-greedy algorithm \ref{algoPODGreedy} or \ref{algogreedy} from the fine solutions of \eqref{varpara2disc}. 

Let us consider the NIRB approximation, defined by \eqref{NIRBapproximation0}.
Then, the following estimate holds
\begin{equation}
\forall n = 0,\dots, N_T, \ \norm{u(t^n)(\mu)-u_{Hh}^{N,n}(\mu)}_{H^1(\Omega)}\leq \varepsilon(N) + C_1(\mu) h +C_2(N) H^2 +  C_3(\mu) \Delta t_F +C_4(N) \Delta t_G^2,  \label{estest2}
\end{equation}
where $C_1, C_2, C_3$ and $C_4$ are constants independent of $h$ and $H$, $\Delta t_F$ and $\Delta t_G$. The term $\varepsilon$ depends on the Kolmogorov $N$-width and measures the error given by \eqref{truerror}. 
  \end{theorem}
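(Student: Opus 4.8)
The plan is to estimate the error by the triangle inequality, inserting the fine fully-discrete solution $u_h^n(\mu)$ and its $L^2$-orthogonal projection onto the reduced space $X_h^N$. Suppressing the parameter $\mu$ and writing $\Pi^N v := \sum_{i=1}^N (v,\Phi_i^h)\,\Phi_i^h$ for the $L^2$-projection onto $X_h^N$ (so that $u_{Hh}^{N,n}=\Pi^N\widetilde{u_H}^n$), I would split, using linearity $\Pi^N u_h^n-\Pi^N\widetilde{u_H}^n=\Pi^N(u_h^n-\widetilde{u_H}^n)$,
\begin{align*}
\norm{u(t^n)-u_{Hh}^{N,n}}_{H^1(\Omega)} \leq {} & \underbrace{\norm{u(t^n)-u_h^n}_{H^1(\Omega)}}_{(I)} + \underbrace{\norm{u_h^n - \Pi^N u_h^n}_{H^1(\Omega)}}_{(II)} \\
& {} + \underbrace{\norm{\Pi^N(u_h^n - \widetilde{u_H}^n)}_{H^1(\Omega)}}_{(III)} .
\end{align*}

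The first two terms require no new work. Term $(I)$ is exactly the fully-discrete $H^1$ error of the fine backward-Euler scheme, so Theorem \ref{th14bis} bounds it by $C_1(\mu)h+C_3(\mu)\Delta t_F$, after controlling the time-integral factors by constants depending on $\mu$ and the data through the stated regularity assumptions. Term $(II)$ is precisely the reduced-basis projection error \eqref{truerror} of the fine snapshot $u_h^n(\mu)$; since $X_h^N$ is built from the family $\{u_h^n(\mu)\}$ by the POD-greedy algorithm, this is controlled by the quantity $\varepsilon(N)$ attached to the Kolmogorov $N$-width of the solution manifold.

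The heart of the argument is term $(III)$, and here the $H^1$-orthogonalisation \eqref{orthohuhu} pays off. Because $\Pi^N(u_h^n-\widetilde{u_H}^n)\in X_h^N$, I expand it as $w=\sum_{i=1}^N c_i\Phi_i^h$; the relations $(\Phi_i^h,\Phi_j^h)=\delta_{ij}$ and $(\nabla\Phi_i^h,\nabla\Phi_j^h)=\lambda_i\delta_{ij}$ coming from \eqref{orthohuhu} give the inverse inequality
\begin{equation*}
\norm{w}_{H^1(\Omega)}^2 = \sum_{i=1}^N (1+\lambda_i)\,c_i^2 \leq (1+\lambda_N)\,\norm{w}_{L^2(\Omega)}^2 , \qquad \lambda_N \ \text{the largest eigenvalue},
\end{equation*}
hence $\norm{w}_{H^1(\Omega)}\leq \sqrt{1+\lambda_N}\,\norm{w}_{L^2(\Omega)}$. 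Combining this with the $L^2$-stability $\norm{\Pi^N v}_{L^2(\Omega)}\leq\norm{v}_{L^2(\Omega)}$ of the orthogonal projection, and inserting the exact solution once more, yields
\begin{equation*}
(III) \leq \sqrt{1+\lambda_N}\,\norm{u_h^n-\widetilde{u_H}^n}_{L^2(\Omega)} \leq \sqrt{1+\lambda_N}\,\Big(\norm{u_h^n-u(t^n)}_{L^2(\Omega)} + \norm{u(t^n)-\widetilde{u_H}^n}_{L^2(\Omega)}\Big) .
\end{equation*}
Applying Theorem \ref{th15} to the first $L^2$ term (order $h^2$ and $\Delta t_F$) and the Corollary of Theorem \ref{thCrankNicolson}, i.e. \eqref{coro}, to the interpolated coarse term (order $H^2$ and $\Delta t_G^2$), the contributions $\sqrt{1+\lambda_N}\,h^2$ and $\sqrt{1+\lambda_N}\,\Delta t_F$ are dominated under the natural balance $h\sim H^2$, $\Delta t_F\sim\Delta t_G^2$ and absorbed into $C_2(N)H^2+C_4(N)\Delta t_G^2$, with $C_2(N),C_4(N)\propto\sqrt{1+\lambda_N}$. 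Summing the three bounds gives \eqref{estest2}.

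The main obstacle is term $(III)$: the NIRB approximation projects the \emph{coarse}, time-interpolated solution, so its $H^1$ error cannot be obtained from a coarse $H^1$ estimate — there is no Aubin–Nitsche duality for the time stepping, which is exactly why the higher-order Crank–Nicolson scheme and the quadratic-in-time interpolation \eqref{parabola} were introduced. The device that makes everything work is trading the $H^1$ norm on the finite-dimensional space $X_h^N$ for the $L^2$ norm via the eigenvalue inequality, at the cost of the $N$-dependent constant $\sqrt{1+\lambda_N}$; this both explains the appearance of $C_2(N)$, $C_4(N)$ and the trade-off in $N$ noted after \eqref{estimationNIRBFEM}. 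A secondary point to check is that the time indices match: $\widetilde{u_H}^n$ is the interpolation \eqref{parabola} evaluated at the fine node $t^n$, so one must confirm that the $L^2$ estimate \eqref{coro} holds at $t^n$ and not merely at the coarse nodes, which is guaranteed by the second-order accuracy of the interpolation in $\Delta t_G$.
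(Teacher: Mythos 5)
Your proposal is correct and follows essentially the same route as the paper: the same triangle-inequality decomposition through $u_h^n$ and its $L^2$-projection $u_{hh}^{N,n}=\Pi^N u_h^n$, with Theorem \ref{th14bis} for the first term, the Kolmogorov-width bound $\varepsilon(N)$ for the second, and the eigenvalue inequality from \eqref{orthohuhu} trading the $H^1$ norm for the $L^2$ norm on $X_h^N$ (at the cost of $\sqrt{\lambda_N}$) combined with Theorem \ref{th15} and \eqref{coro} for the third. The only cosmetic difference is your constant $\sqrt{1+\lambda_N}$ versus the paper's $\sqrt{\lambda_N}$, arising from using the full $H^1$ norm rather than the seminorm.
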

If $H$ is such as $H^2 \sim h$, $\Delta t_G^2\sim \Delta t_F$, and $\varepsilon(N)$ is small enough, with $C_2(N)$ and $C_4(N)$ not too large, it results in an error estimate in $\mathcal{O}(h + \Delta t_F)$.
Theorem \ref{EllipticEst} then states that we recover optimal error estimates in $L^{\infty}(0,T;H^1(\Omega))$. 
\begin{remark}
 This theorem can be generalized to $\mathbb{P}_k$ FEM space, with $k>1$.
  \end{remark}

With the $L^2$ norm, we obtain the following theorem.
\begin{theorem}
\label{NIRBcontribFEML2}
With the same assumptions as in the theorem \ref{EllipticEst}, with the $L^2$ orthonormalized RB, the following estimate holds
\begin{equation}
  \forall n = 0,\dots, \frac{\Delta t_F}{T}, \ \norm{u(t^n)(\mu)-u_{Hh}^{N,n}(\mu)}_{L^2(\Omega)}\leq \varepsilon'(N) + C_1' (H^2+ \Delta t_G^2) + C_2' (h^2 + \Delta t_F),
    \label{estimationNIRBFEM1L2}
\end{equation}
where $C_1'$ and $C_2'$ are constants independent of $h$, $H$ and $N$, and $\varepsilon'$ depends on the Kolmogorov N-width, and corresponds to the $L^2$ error between the fine solution and its projection on the reduced space.
\end{theorem}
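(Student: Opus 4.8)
Although this is stated as an independent theorem, the plan is to mirror the skeleton used for the energy estimate of Theorem \ref{EllipticEst}, while exploiting the key simplification of the $L^2$ setting: no special basis is needed. Since $(\Phi_i^h)_{i=1,\dots,N}$ is $L^2$-orthonormal, the operator $P^N v := \sum_{i=1}^N (v,\Phi_i^h)\,\Phi_i^h$ is exactly the $L^2$-orthogonal projection onto $X_h^N$, hence a contraction in $L^2(\Omega)$, and by \eqref{NIRBapproximation0} the NIRB approximation is simply $u_{Hh}^{N,n}(\mu)=P^N\widetilde{u_H}^n(\mu)$. This is precisely why the statement only requires the $L^2$-orthonormalized reduced basis and not the $H^1$-orthogonal eigenbasis of \eqref{orthohuhu}.

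First I would split the error by inserting the fine fully discrete solution $u_h^n$ and its projection $P^N u_h^n$:
\begin{equation*}
u(t^n)-u_{Hh}^{N,n} = \underbrace{\big(u(t^n)-u_h^n\big)}_{\mathrm{(I)}} + \underbrace{\big(u_h^n - P^N u_h^n\big)}_{\mathrm{(II)}} + \underbrace{P^N\big(u_h^n - \widetilde{u_H}^n\big)}_{\mathrm{(III)}}.
\end{equation*}
Term $\mathrm{(I)}$ is the full fine discretization error and is controlled directly by Theorem \ref{th15}, which yields a bound of the form $C(h^2+\Delta t_F)$ with a constant depending only on Sobolev norms of $u$ (and on $\mu$), independent of $N$. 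Term $\mathrm{(II)}$ is the projection error of the fine snapshot onto the reduced space; under the small Kolmogorov $N$-width assumption this is exactly what $\varepsilon'(N)$ measures (the $L^2$ analogue of \eqref{truerror}), and it carries the only $N$-dependence in the final bound.

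For term $\mathrm{(III)}$ the decisive step is the $L^2$-contractivity of $P^N$, giving $\|P^N(u_h^n-\widetilde{u_H}^n)\|_{L^2(\Omega)}\leq \|u_h^n-\widetilde{u_H}^n\|_{L^2(\Omega)}$; a further triangle inequality with the exact solution gives
\begin{equation*}
\|u_h^n-\widetilde{u_H}^n\|_{L^2(\Omega)} \leq \|u_h^n-u(t^n)\|_{L^2(\Omega)} + \|u(t^n)-\widetilde{u_H}^n\|_{L^2(\Omega)}.
\end{equation*}
The first summand is again bounded via Theorem \ref{th15}, and the second via the Corollary of Theorem \ref{thCrankNicolson}; this is the reason the coarse solution was quadratically interpolated in time through \eqref{parabola}, namely to preserve the second order $\Delta t_G^2$ accuracy at the fine time nodes $t^n$, producing a contribution $C(\mu)(H^2+\Delta t_G^2)$. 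Summing the three contributions gives $\varepsilon'(N)+C_1'(H^2+\Delta t_G^2)+C_2'(h^2+\Delta t_F)$, with $C_1',C_2'$ assembled only from the fine and coarse discretization constants and thus independent of $N$, as claimed.

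I expect the only genuinely delicate points to be bookkeeping rather than analysis. One must check that the contraction estimate for $\mathrm{(III)}$ requires nothing beyond $L^2$-orthonormality of the basis (in contrast to the $H^1$ proof of Theorem \ref{EllipticEst}, where stability of $P^N$ in the energy norm forces the use of the eigenvalue problem \eqref{orthohuhu}), so that the $L^2$ statement is strictly simpler. The second point is temporal consistency: the interpolation error must be evaluated at matching time instants, so that the mismatch between the fine nodes $t^n$ and the coarse nodes $\widetilde{t}^m$ does not degrade the $\Delta t_G^2$ order when applying the Corollary. Once these are verified, the combination of the three bounds is immediate.
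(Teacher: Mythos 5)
Your proposal is correct and follows essentially the same route as the paper: the same three-term splitting through $u_h^n$ and its reduced-space projection, with Theorem \ref{th15} for the fine discretization error, the Kolmogorov-width term $\varepsilon'(N)$ for the projection error, and the $L^2$-orthonormality of the basis (your contraction argument is the paper's Parseval identity in disguise) combined with the Corollary of Theorem \ref{thCrankNicolson} for the coarse term. Your explicit attention to using the time-interpolated $\widetilde{u_H}^n$ in the third term is if anything slightly more careful than the paper's write-up.
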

\end{paragraph}
\begin{remark}
  Note that now $C_2'$ does not depend on $N$, unlike $C_2$ or $C_4$ above.
  \end{remark}
We now go on with the proof of Theorem \ref{EllipticEst}.
\begin{proof}
  The NIRB approximation at time step $n=0,\dots,N_T$, for a new parameter $\mu \in \mathcal{G}$ is defined by \eqref{NIRBapproximation0}.
Thus, the triangle inequality gives
\begin{align}
\ \norm{u(t)(\mu)-u_{Hh}^{N,n}(\mu)}_{H^1(\Omega)}&\leq \norm{u(t)(\mu)- u_{h}^n(\mu)}_{H^1(\Omega)}+\norm{u_h^n(\mu)-u_{hh}^{N,n}(\mu)}_{H^1(\Omega)}+ \norm{u_{hh}^{N,n}(\mu)-u_{Hh}^{N,n}(\mu)}_{H^1(\Omega)} \nonumber \\
&=:T_1+T_2+T_3, 
\label{triangleinequalitytime}
\end{align}
where $u_{hh}^{N,n}(\mu)=\overset{N}{\underset{i=1}{\sum}}  (u_h^n(\mu),\Phi^h_i)\ \Phi^h_i$.\\
\begin{itemize}
    \item 
    The first term $T_1$ may be estimated using the inequality \eqref{H1fullParabol},
such that 
\begin{equation}
    \norm{u(t^n)(\mu)-u_h^n(\mu)}_{H^1(\Omega)}\leq C(\mu)\ (h + \Delta t_F).
    \label{cea2}
\end{equation}

\item We denote by $\mathcal{S}_h=\{ u_h^n(\mu,t),\mu~\in~\mathcal{G},\ n=0,\dots N_T \}$ the set of all the solutions. For our model problem, this manifold has a low complexity. It means that for an accuracy $\varepsilon=\varepsilon(N)$ related to the Kolmogorov $N$-width of the manifold $\mathcal{S}_h$, for any $\mu \in \mathcal{G}$, and any $n \in 0,\dots,N_T$, $T_2$ is bounded by $\varepsilon$ which depends on the Kolmogorov $N$-width.
\begin{equation}
  T_2=\norm{u_h^n(\mu)-\overset{N}{\underset{i=1}{\sum}}(u_h^n(\mu),\Phi^h_i)\ \Phi^h_i}_{H^1(\Omega)} \leq \varepsilon(N).\label{kolmoFEMnew}
  \end{equation}
\item  The third term $T_3$ depends on the method used to create the RB.

  \begin{enumerate}
    \item Let us first consider the greedy approach with a Gram-Schmidt procedure and an eigenvalue problem \eqref{orthohuhu}, which yields to an orthogonalization in $L^2$ and in $H_1$. Therefore,
  
    \begin{equation}
\label{term3ortho}
    \norm{u_{hh}^{N,n}-u_{Hh}^{N,n}}_{H^1(\Omega)}^2 = \overset{N}{\underset{i=1}{\sum}} |(u_h^n(\mu)-\widetilde{u_H}^n(\mu),\Phi^{h}_{i})|^2 \norm{\Phi^{h}_{i}}_{H^1(\Omega)}^2,
    \end{equation}
    where $\widetilde{u_H}^n(\mu)$ is the quadratic interpolation of the coarse snapshots on time $t^n$,\ $\forall n= 0,\dots, N_T$, defined by \eqref{parabola}.
From the RB orthonormalization in $L_2$, the equation \eqref{orthohuhu} yields
\begin{equation}
  \label{orthoplus}
    \norm{\Phi^{h}_{i}}_{H^1}^2:=\norm{\nabla \Phi^{h}_{i}}_{L^2(\Omega)}^2
    =\lambda_i \norm{\Phi^{h}_{i}}_{L^2(\Omega)}^2
    =\lambda_i \leq \underset{i=1,\cdots,N}{\max \ }\lambda_i=\lambda_N,
\end{equation}
such that the equation  \eqref{term3ortho} yields
\begin{equation}
  \label{ici}
     \norm{u_{hh}^{N,n}-u_{Hh}^{N,n}}_{H^1(\Omega)}^2    \leq    C \lambda_N\norm{u_h^n(\mu)-\widetilde{u_H}^n(\mu)}_{L^2(\Omega)}^2.
\end{equation}


Now by definition of $\widetilde{u_H}^n(\mu)$ and by corollary \ref{coro} and Theorem \ref{th15}, for $t^n \in I_m,$\\
\begin{equation}
  \label{Aubinlike}
  \norm{u_h^n(\mu)-\widetilde{u_H}^n(\mu)}_{L^2} \leq  C( H^2+ \Delta t_G^2+h + \Delta t_F ), 
\end{equation}
and  we end up for equation \eqref{ici} with
\begin{equation}
    \norm{u_{hh}^{N,n}-u_{Hh}^{N,n}}_{H^1(\Omega)} \leq C \sqrt{\lambda_N} (H^2 +  \Delta t_G^2 + h + \Delta t_F),
    \label{T3termParaboo}
\end{equation}
where $C$ does not depend on $N$. Combining these estimates \eqref{cea2}, \eqref{kolmoFEMnew} and \eqref{T3termParaboo}, we obtain the estimate \eqref{estest2}.
\item Now we consider only an $L^2$-orthonormalized basis, which we will denote $(\Psi_{h,i})_{i=1,\dots,N}$ (obtained by a Gram-Schmidt algorithm or with the Greedy-POD algorithm \ref{algoPODGreedy}).
  The functions $(\Psi_{h,i})_{i=1,\dots,N}$ and $(\Phi^{h}_{i})_{i=1,\dots,N}$ are both generators of $X_h^N$. Thus, there exists $(\gamma^{i})_{i=1,\dots,N}\in \mathbb{R}^N$ such that $\Psi_{h,i}=\overset{N}{\underset{j=1}{\sum}} \gamma_j^{i} \Phi^{h}_{i}$.
  By the $H^1$-orthogonality of the $(\Phi^{h}_{i})_{j=1,\dots,N}$, it follows
\begin{align}
    \norm{\Psi_{h,i}}^2_{H^1} &=  \overset{N}{\underset{j=1}{\sum}} |\gamma_j^{i}|^2 \norm{\Phi^{h}_{i}}^2_{H^1}, \nonumber \\
    &\leq  \lambda_N \overset{N}{\underset{j=1}{\sum}} |\gamma_j^{i}|^2 \norm{\Phi^{h}_{i}}_{L^2(\Omega)}^2 \textrm{ by equation \eqref{orthohuhu}},\nonumber \\
    &= \lambda_N \norm{\Psi_{h,i}}_{L^2(\Omega)}^2 \textrm{ by the $L^2$-orthogonality of the } (\Psi_{h,i}^n)_{i=1,\dots,N}. \label{basisorthovspasorthoplus}
\end{align}

From the estimate \eqref{basisorthovspasorthoplus} and the  $L_2$-orthonormalization of the RB, 
\begin{align}
\label{term3pluss}
    \norm{u_{hh}^{N,n}(\mu)-u_{Hh}^{N,n}(\mu)}_{H^1} 
    &\leq \overset{N}{\underset{i=1}{\sum}} |(u_h^n(\mu)-\widetilde{u_H}^n(\mu),\Psi_{h,i}^n)| \norm{\Psi_{h,i}}_{H^1}, \nonumber\\
    & \leq C \sqrt{\lambda_N}  \overset{N}{\underset{i=1}{\sum}} |(u_h^n(\mu)-\widetilde{u_H}^n(\mu),\Psi_{h,i})|.
   \end{align}
 From Cauchy-Schwarz inequality, inequality \eqref{term3pluss} leads to
 \begin{align*}
 \label{term3Ndep1}
  \norm{u_{hh}^{N,n}(\mu)-u_{Hh}^{N,n}(\mu)}_{H^1} & \leq C \sqrt{\lambda_N} \sqrt{N} \sqrt{ \overset{N}{\underset{i=1}{\sum}}|(u_h^n(\mu)-\widetilde{u_H}^n(\mu),\Psi_{h,i})|^2} , \nonumber \\
    &\leq C \sqrt{\lambda_N}\sqrt{N}\norm{u_h^n(\mu)-\widetilde{u_H}^n(\mu)}_{L^2(\Omega)},
\end{align*}
 and we end up with
\begin{equation}
    \norm{u_{hh}^{N,n}(\mu)-u_{Hh}^{N,n}(\mu)}_{H^1}\leq C\sqrt{N}\sqrt{\lambda_N}(H^2+ \Delta t_G^2 + h + \Delta t_F),
    \label{T3FEM}
\end{equation}
which leads to estimate \eqref{estest2} using the inequalities \eqref{cea2}, \eqref{kolmoFEMnew}, and \eqref{T3FEM}, and concludes the proof.

  \end{enumerate}
\end{itemize}
\end{proof}
\begin{remark}
    Note that, from the proof of Theorem \ref{EllipticEst}, the estimate of the method implemented with an only $L^2$ orthonormalized basis set has an additional $\sqrt{N}$ factor (where $N$ is the number of modes) compared to the one obtained from the $L^2$ and $H^1$ orthogonalized basis set. Thus, the NIRB approximation is stabilized with the $H^1$ orthogonality, compared with a RB only orthogonalized in $L^2$. This difference may be numerically observed on more complex numerical results which require more modes \cite{grosj2022}.
\end{remark}

\begin{paragraph}{$L^2$ estimate.}
We proceed with the proof of theorem \ref{NIRBcontribFEML2}.
\begin{proof}
In analogy with the $H^1$ estimate, we have
\begin{align}
\forall n=0, \dots, N_T,\  \norm{u(t^n)(\mu)-u_{Hh}^{N,n}(\mu)}_{L^2}&\leq \norm{u(t^n)(\mu)- u_{h}^n(\mu)}_{L^2}+\norm{u_h^n(\mu)-u_{hh}^{N,n}(\mu)}_{L^2}+ \norm{u_{hh}^{N,n}(\mu)-u_{Hh}^{N,n}(\mu)}_{L^2} \nonumber \\
&=:T_1+T_2+T_3.
\label{triangleinequality1L2}
\end{align}
\begin{itemize}
\item For the first term $T_1$, it follows theorem \ref{th15} that
  \begin{equation}
    \label{T1L2}
T_1 \leq C (h^2+\Delta t_F).
\end{equation}
\item As with the $H^1$ estimate, $T_2$ can be estimated with the Kolmogorov N-width, and thus, for an accuracy $\varepsilon'= \varepsilon'(N) \leq \varepsilon(N)$ (where $\varepsilon(N)$ bounds the $H^1$ error)
  \begin{equation}
    \label{T2L2}
T_2 \leq \varepsilon'.
\end{equation}

\item For the last term $T_3$, by $L^2$-orthonormality,
\begin{align}
\norm{u_{hh}^{N,n}(\mu)-u_{Hh}^{N,n}(\mu)}_{L^2(\Omega)}^2&= \overset{N}{\underset{i=1}{\sum}}|(u_h^n(\mu)-u_H^n(\mu),\Psi_{h,i}^n)|^2 \norm{\Psi^n_{h,i}}_{L^2(\Omega)}^2, \nonumber\\
&\leq C\norm{u_h^n(\mu)-u_H^n(\mu)}_{L^2(\Omega)}^2.
\end{align}
Note that, the dependence in $N$ is removed in the previous inequality. 
By theorem \ref{th16} and triangle inequality, it leads to
\begin{equation}
\norm{u_{hh}^{N,n}(\mu)-u_{Hh}^{N,n}(\mu)}_{L^2(\Omega)}^2 \leq C \ (H^2+ \Delta t_G^2 + h + \Delta t_F).
\label{term3L2}
\end{equation}
Combining \eqref{triangleinequality1L2}  with \eqref{T1L2}, \eqref{T2L2}, \eqref{term3L2} concludes the proof. 
\end{itemize}
\end{proof}
\end{paragraph}

\section{Numerical results.}
\label{results}
 In this section, we have applied the NIRB algorithm on several numerical tests. For each case, we compare the plain NIRB errors (without the rectification post-treatment) with the rectified NIRB errors given by algorithm \ref{rectiffz12}:
  \begin{itemize}
  \item first, on the heat equation \eqref{heatEq2} with $\Delta t_G\simeq H \simeq 2\ h \simeq 2 \ \Delta t_F$. Note that in some situations, because of the constants $C_2$ and $C_4$ in the estimate of theorem \ref{EllipticEst}, the best size for the coarse mesh may not be $\Delta t_F^{1/2}$. 
    \item then, on the heat equation with $\Delta t_G^2\simeq H \simeq \sqrt{h} \simeq \Delta t_F$. 
    \item finally, we also tested our problem on a more complex problem, which is the Brusselator equations.
      \end{itemize}
  We have implemented both schemes (Euler and RK2) using FreeFem++ (version 4.9) \cite{bamg2} to compute the fine and coarse snapshots, and the solutions have been stored in VTK format (with $u^0=0$).
Then we have applied the plain NIRB and the NIRB rectified algorithm with python, in order to highlight the non-intrusive side of this method (as in \cite{grosj2022}). After saving the NIRB approximations with Paraview module on Python, the errors have been computed with FreeFem++. 

\subsection{The heat equation with $\Delta t_G\simeq H \simeq 2\ h \simeq 2 \ \Delta t_F$.}
We have taken the parameter set $\mathcal{G}=[0.5,9.5]$. \\
Not that for $\mu=1$, we can calculate an analytical solution, which is given by
\begin{equation}
  \label{mu1}
u(t,\xx;1)=10 t x^2(1-x)^2y^2(1-y)^2,
\end{equation}
for a right-hand side function
\begin{equation}
        f(t,\xx)= 10 [x^2(x-1)^2y^2(y-1)^2 - 2t((6x^2-6x+1)(y^2(y-1)^2)+(6y^2-6y+1)(x^2(x-1)^2))],
     \end{equation}
where $\xx=(x,y)$.

We have retrieved several snapshots on $t=[1,2]$ (note that the coarse time grid must belong to the interval of the fine one), and tried our algorithms on several size of meshes, always with $\Delta t_F \simeq h$ and $\Delta t_G \simeq H$ (both schemes are stables).

\begin{itemize}
\item We have first taken 18 parameters in $\mathcal{G}$ for the RB construction such that $\mu_i=0.5i, \ i=1,\dots, 19,\ i\neq 2,$ and the true solution \eqref{mu1}, with $\mu=1$. In what follows 
\figref{H10errorsmu1} and \figref{L2errorsmu1}, we present the errors of the FEM solutions and compare them to the ones obtained with the NIRB algorithms ((with POD-Greedy) to observe the convergence rate. 

We recall that the rectification post-processing step is done for each time step.
Thus, the NIRB with rectification is given by

\begin{equation}
  R^n[u_{Hh}^{N}](\mu)=\overset{N}{\underset{i,j=1}{\sum}}\ R_{ij}^n\ \alpha_j^H(\mu,t^n) \ \Phi^h_i(\xx), \ n \geq 0,
  \label{NIRBrec}
            \end{equation}
where the rectification matrix $R$ may be seen as a familly of 2nd-order tensors indexed by $n$.

The relative errors have been computed in the maximum-norm in time. The $H^1_0$ NIRB error is defined as

\begin{equation}
\label{errorParabolik}
\frac{\norm{u(1) - u_{Hh}^{N}(1)}_{l^{\infty}(0,\dots,N_T;H^1_0(\Omega))}}{\norm{u(1)}_{l^{\infty}(0,\dots,N_T;H^1_0(\Omega))}},
\end{equation}

and with the rectification post-treatment we have
\begin{equation}
\label{errorParabolikrect}
\frac{\norm{u(1) - R[u_{Hh}^{N}(1)]}_{l^{\infty}(0,\dots,N_T;H^1_0(\Omega))}}{\norm{u(1)}_{l^{\infty}(0,\dots,N_T;H^1_0(\Omega))}},
\end{equation}

where $R[u_{Hh}^N]$ is defined by \eqref{NIRBrec}, and these relative errors are compared to the FEM ones defined as

\begin{equation}
\label{errorParabolikFEM}
\frac{\norm{u(1) - u_{h}(1)}_{l^{\infty}(0,\dots,N_T;H^1_0(\Omega))}}{\norm{u(1)}_{l^{\infty}(0,\dots N_T;H^1_0(\Omega))}} \textrm { and } \frac{\norm{u(1) - u_{H}(1)}_{l^{\infty}(0,\dots,N_T;H^1_0(\Omega))}}{\norm{u(1)}_{l^{\infty}(0,\dots, N_T;H^1_0(\Omega))}}.
\end{equation}

\begin{figure} 
            \centering
            \includegraphics[scale=0.4]{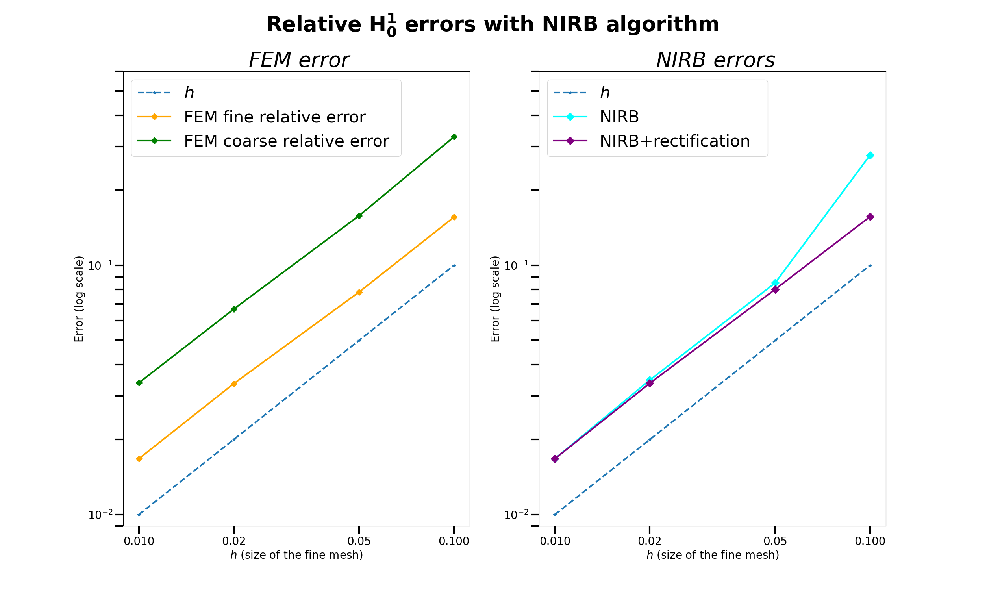}
            \caption{$\Delta t_G\simeq H \simeq 2\ h \simeq 2 \ \Delta t_F$. Convergence rate for $\mu=1$ (as a new parameter): FEM $H^1_0$ relative errors \eqref{errorParabolikFEM} for several sizes of mesh (left) compared to the NIRB method with ($N=3$) and without the rectification post-treatment ($N=3$) (right) \eqref{errorParabolikrect}}%
            \label{H10errorsmu1}
\end{figure}

We also plot the $L^2$ errors in \figref{L2errorsmu1}. We can see that the NIRB $L^2$ relative error without rectification is very close to the coarse $L^2$ relative error, thus, in the $L^2$ norm, no improvement is provided by the NIRB algorithm, however with the rectification post-treatment, the error reaches the fine accuracy.

\begin{figure}
            \centering
            \includegraphics[scale=0.4]{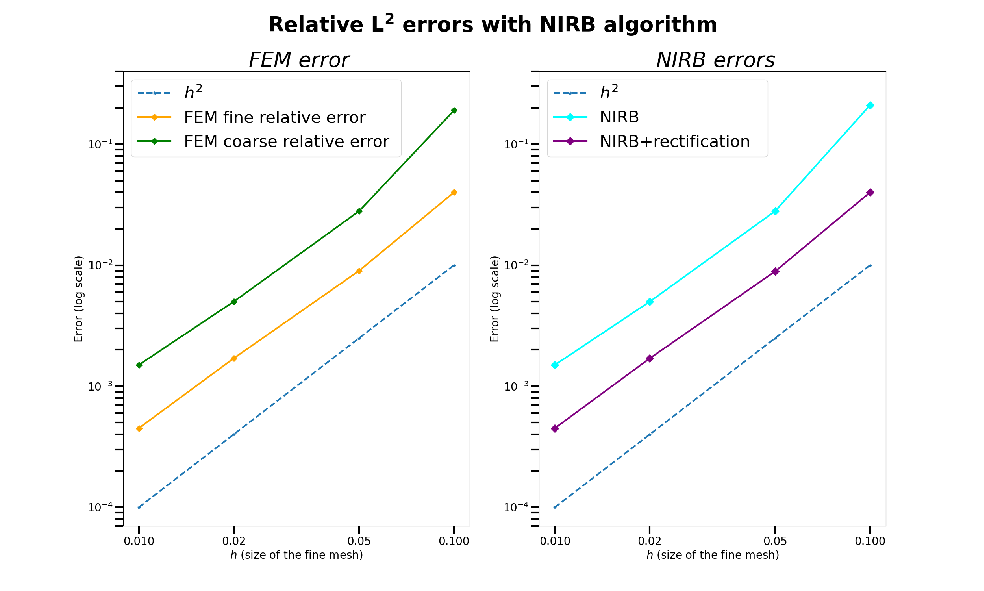}
            \caption{$\Delta t_G\simeq H \simeq 2\ h \simeq 2 \ \Delta t_F$. Convergence rate for $\mu=1$  (as a new parameter): FEM $L^2$ relative errors for several sizes of mesh (left) compared to NIRB with ($N=3$) and without the rectification post-treatment ($N=3$) (right) }%
            \label{L2errorsmu1}
\end{figure}

\item Then, we have taken 19 parameters in $\mathcal{G}$ for the RB construction such that $\mu_i=0.5i, \ i=1,\dots, 19$ and have applied the ``leave-one-out'' strategy. In order to evaluate the NIRB algorithm with respect to the parameters, table \ref{TableParam1} presents the maximum $H^1_0$-error of the NIRB rectified approximation over the parameters. The
error is given by
\begin{equation}
  \label{errfine}
  \underset{\mu \in \mathcal{G}_{train}}{\max} \frac{\norm{u_h(\mu) - R[u_{Hh}^{N}](\mu)}_{l^{\infty}(0,\dots N_T;H^1_0(\Omega))}}{\norm{u_h(\mu)}_{l^{\infty}(0, \dots , N_T;H^1_0(\Omega))}}.
\end{equation}
and the maximum in our training parameters is retrieved for $\mu=9$. 

 \begin{table}[tbhp]
     {\footnotesize
       \caption{ Maximum $H^1_0$ error over the parameters [$\mu=9$] \eqref{errfine} (compared to the true NIRB projection and to the FEM coarse projection) with $N=20$ with $h=0.01$}\label{TableParam1}
   \begin{center}
    \begin{tabular}{ |c| c| c|}
      \hline
 NIRB rectified error & $ \underset{\mu \in \mathcal{G}_{train}}{\max} \frac{\norm{u_h(\mu) - u_{hh}^{N}(\mu)}_{l^{\infty}(0,\dots,N_T;H^1_0(\Omega))}}{\norm{u_h(\mu)}_{l^{\infty}(0,\dots,N_T;H^1_0(\Omega))}}$ &   $\underset{\mu \in \mathcal{G}_{train}}{\max} \frac{\norm{u_h(\mu) - u_H(\mu)}_{l^{\infty}(0,\dots,N_T;H^1_0(\Omega))}}{\norm{u_h(\mu)}_{l^{\infty}(0,\dots,N_T;H^1_0(\Omega))}}$ \\
    \hline
  $4.84 \times 10^{-5} $  & $2.31\times 10^{-10}$ & $ 1.42 \times 10^{-1}$ \\
    \hline
    \end{tabular}
   \end{center}
   }
 \end{table}
\end{itemize}

  \subsubsection{ Time execution (min,sec)}
 \label{time}
 We present the FEM and NIRB runtimes in \ref{runtimes1} and \ref{runtimes2}.
   \begin{table}[tbhp]
     {\footnotesize
       \caption{ FEM runtimes}\label{runtimes1}
   \begin{center}
    \begin{tabular}{ |c| c| }
      \hline
   FEM high fidelity solver & FEM coarse solution \\
    \hline
     00:03 & 00:02\\
    \hline
    \end{tabular}
   \end{center}
   }
   \end{table}
   \begin{table}[tbhp]
     {\footnotesize
       \caption{NIRB runtimes ($N=10$)}\label{runtimes2}
   \begin{center}
    \begin{tabular}{|c|c|}
      \hline
      NIRB Offline & classical rectified NIRB online\\
      \hline
      1:45 & 00:02\\
      \hline
    \end{tabular}
   \end{center}
   }
   \end{table}

   \normalsize
   \subsection{The heat equation with $H^2 \simeq h \simeq \Delta t_G^2 \simeq \Delta t_F$}
  
   As previously, in \figref{H10errorsmu12} we display the convergence rate of the fine approximations (left) and of the NIRB approximations (with and without rectification). For all meshes, we choose $\mu=1$ and as expected, we observe that both NIRB errors converge in $\mathcal{O}(h+\Delta t_F)$, and we retrieved the fine accuracy with the rectified approximation.

\begin{figure}
            \centering
            \includegraphics[scale=0.4]{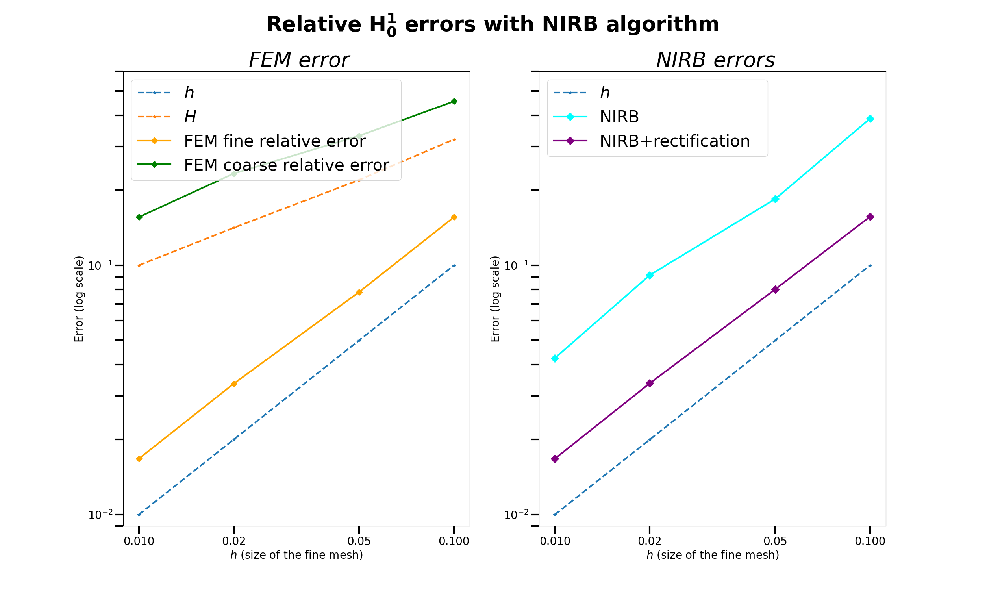}
            \caption{$H^2 \simeq h \simeq \Delta t_G^2 \simeq \Delta t_F$, Convergence rate for $\mu=1$ (as a new parameter): FEM relative $H^1_0$ errors for several sizes of mesh (left) compared to NIRB with ($N$=3) and without the rectification post-treatment ($N=3$) (right)}%
            \label{H10errorsmu12}
\end{figure}

We also plot the $L^2$ errors in \figref{L2errorsmu22}.

\begin{figure}
            \centering
            \includegraphics[scale=0.4]{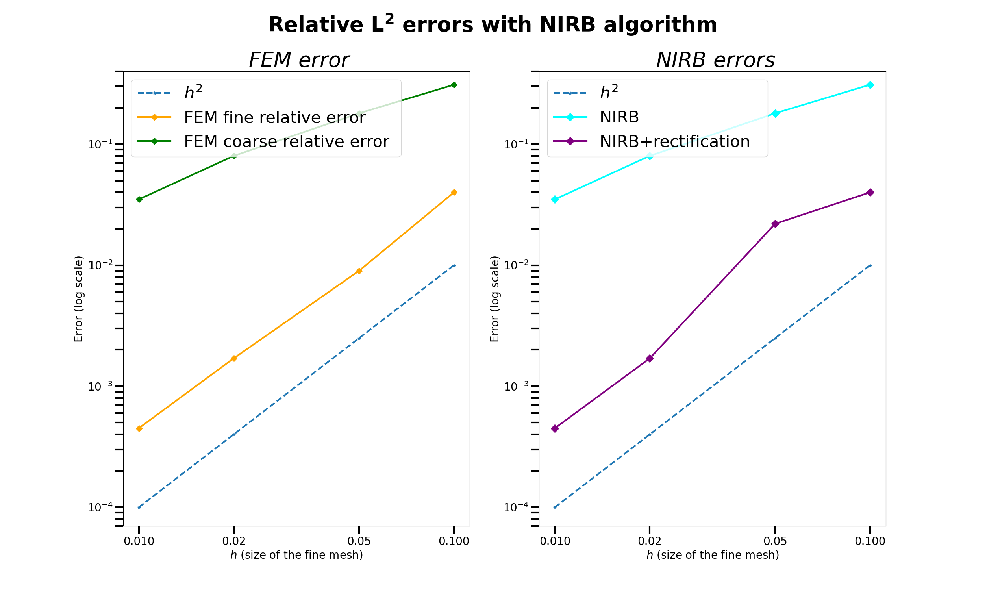}
            \caption{$H^2 \simeq h \simeq \Delta t_G^2 \simeq \Delta t_F$, Convergence rate for $\mu=1$ (as a new parameter): FEM $L^2$ relative errors for several sizes of mesh (left) compared to NIRB with ($N=3$) and without the rectification post-treatment ($N=3$) (right) }%
            \label{L2errorsmu22}
\end{figure}

Finally, in order to evaluate the NIRB algorithm with respect to the parameters, table \ref{TableParam2} presents the maximum $H^1_0$-error of the NIRB rectified approximation over the parameters. The
error is given by \eqref{errfine}, and the maximum in our training parameters is retrieved for $\mu=9$. 
 \begin{table}[tbhp]
     {\footnotesize
       \caption{ Maximum $H^1_0$ error over the parameters [$\mu=9$] \eqref{errfine} (compared to the true NIRB projection and to the FEM coarse projection) with $N=20$ with $h=0.01$}\label{TableParam2}
   \begin{center}
    \begin{tabular}{ |c| c| c|}
      \hline
 NIRB rectified error & $ \underset{\mu \in \mathcal{G}_{train}}{\max} \frac{\norm{u_h(\mu) - u_{hh}^{N}(\mu)}_{l^{\infty}(0,\dots,N_T;H^1_0(\Omega))}}{\norm{u_h(\mu)}_{l^{\infty}(0,\dots,N_T;H^1_0(\Omega))}}$ &   $\underset{\mu \in \mathcal{G}_{train}}{\max} \frac{\norm{u_h(\mu) - u_H(\mu)}_{l^{\infty}(0,\dots,N_T;H^1_0(\Omega))}}{\norm{u_h(\mu)}_{l^{\infty}(0,\dots,N_T;H^1_0(\Omega))}}$ \\
    \hline
  $4.21 \times 10^{-3} $  & $1.40\times 10^{-9}$ &  $7.80 \times 10 ^{-1}$ \\
    \hline
    \end{tabular}
   \end{center}
   }
 \end{table}
 
We observe that the errors without the rectification post-treatment increases with $N$ due to the role of the constants $C_2$ and $C_4$ in the estimate of theorem \ref{EllipticEst}, whereas with the post-treatment they remain stable. This is illustrated by \figref{NEVChanges} where the $H^1_0$ errors are displayed for $\mu=1$ and different number of modes $N$.

\begin{figure}
            \centering
            \includegraphics[scale=0.4]{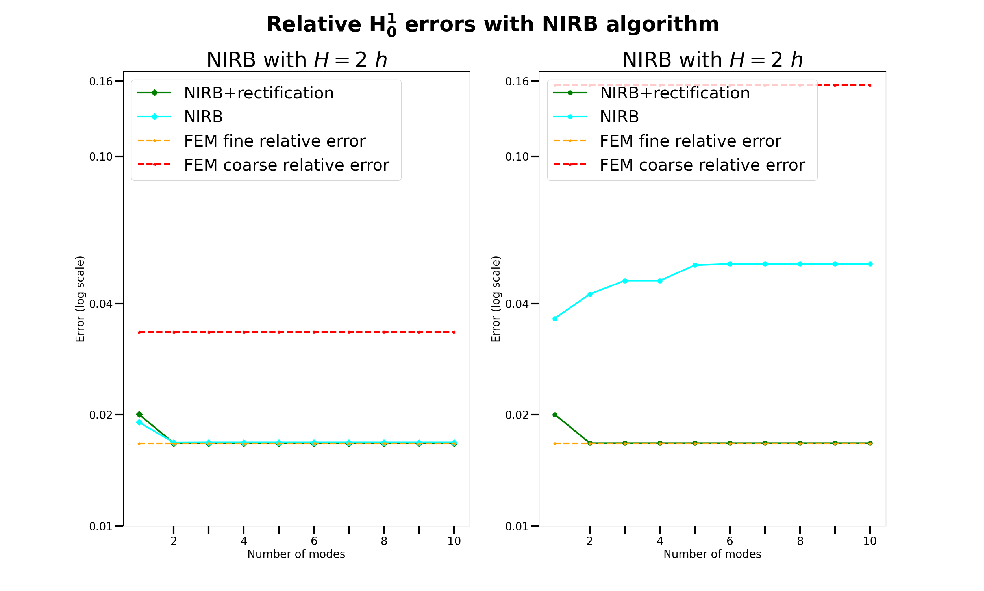}
            \caption{For $h=0.01$: $H=2h$ (left) , $H=h^2$ (right) $\mu=1$, NIRB relative $H^1_0$ errors and rectified NIRB (+ rectification post-treatment) $H^1_0$ compared to FEM errors with different modes $N$}%
            \label{NEVChanges}
\end{figure}
\newpage
\begin{remark}
We may also consider NIRB aproximations of \eqref{varpara2disc} under the form
\begin{equation}
  u_{Hh}^{N,n}(\xx; \mu)= \overset{N}{\underset{i=1}{\sum}} \alpha_i^H(\mu,t^n) \ \Phi_{h,i}^n(\xx), \ n \geq 0,
  \label{otherdecom}
\end{equation}
with $(\Phi_{h,i}^n)_{i=1,\dots,N}$ time-dependent basis functions. This time, the greedy algorithm \ref{algogreedy} is executed for each time step and thus, this method is less efficient (in term of storage) since we have to store $N$ times the number of time steps of the reduced basis.

With this decomposition, we obtained the following results (see \figref{NIRBPArabolbis}).
\begin{figure}
         \centering
         \includegraphics[scale=0.4]{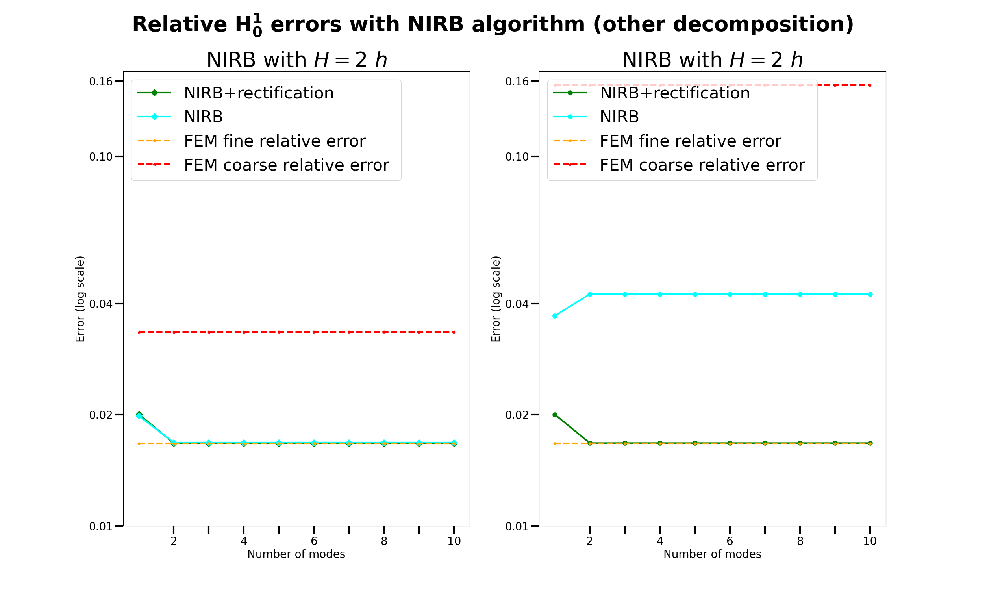}
           \caption{For $h=0.01$: $H=2h$ (left) , $H=h^2$ (right) $\mu=1$, NIRB relative $H^1_0$ errors and rectified NIRB (+ rectification  post-treatment) $H^1_0$ compared to FEM errors with different modes $N$ \textit{using the other NIRB decomposition \eqref{otherdecom}}}
             \label{NIRBPArabolbis}
\end{figure}

\end{remark}

  \subsubsection{ Time execution (min,sec)}
 \label{time2}
 We present the FEM and NIRB runtimes in \ref{runtimes12} and \ref{runtimes22}.
   \begin{table}[tbhp]
     {\footnotesize
       \caption{ FEM runtimes}\label{runtimes12}
   \begin{center}
    \begin{tabular}{ |c| c| }
      \hline
   FEM high fidelity solver & FEM coarse solution \\
    \hline
    00:03 & 00:01\\
    \hline
    \end{tabular}
   \end{center}
   }
   \end{table}
   \begin{table}[tbhp]
     {\footnotesize
       \caption{NIRB runtimes ($N=10$)}\label{runtimes22}
   \begin{center}
    \begin{tabular}{|c|c|}
      \hline
      NIRB Offline & classical rectified NIRB online\\
      \hline
      1:32 & 00:02\\
      \hline
    \end{tabular}
   \end{center}
   }
   \end{table}
   
\subsubsection{Comments on the results}
 \begin{itemize}
 \item On the first tests, with $\Delta t_G\simeq H \simeq 2\ h \simeq 2 \ \Delta t_F$, we observe that the NIRB
   \begin{itemize}
     \item  with and without the rectification post-treatment converge in $\mathcal{O}(h+ \Delta t_F)$, as expected from the estimates of Theorem \ref{EllipticEst} for the plain NIRB (see \figref{H10errorsmu1}).
     \item gives the same accuracy as with the HF solutions in the $H^1_0$ norm in both cases.
     \item yields an optimal $L^2$ error estimate for the NIRB with the rectification post-treatment, in $\mathcal{O}(h^2 + \Delta t_F)$, whereas the error for the plain NIRB is not enhanced by the NIRB algorithm (compared to the coarse FEM approximation), as predicted by Theorem \ref{NIRBcontribFEML2} (see \figref{L2errorsmu1}).
   \end{itemize}
   The rough mesh size is finer than $\sqrt{h}$. In that case, the plain NIRB algorithm is sufficient to retrieve the optimal $H^1$ accuracy.
 \item Then, on the heat equation with $\Delta t_G^2\simeq H \simeq \sqrt{h} \simeq \Delta t_F$, we remark that
   \begin{itemize}
   \item with both algorithms, the error converge in $\mathcal{O}(h+ \Delta t_F)$ (see \figref{H10errorsmu12}).
   \item The plain NIRB method allows us to reduce the $H^1_0$ error compared to the coarse FEM approximation. Yet, the fine accuracy is recovered only while adding the rectification post-treatment.
     \item We retrieved the rates of convergence expected from theorem \ref{NIRBcontribFEML2} in $\mathcal{O}(h^2+ \Delta t_F)$, yet only the NIRB with the rectification post-treatment yields the same errors as the HF ones (see \figref{L2errorsmu22}). 
 \end{itemize}
 \end{itemize}

\subsection{The parameterized Brusselator equations}
The Brusselator problem \cite{MITTAL20115404} involves chemical reactions. It is a more complex test from a simulation point of view. The chemical concentrations in this problem are controlled by parameters throughout the reaction process, making it an interesting application of a NIRB method.
Let us introduce the Brusselator problem in a spatial domain $\Omega=[0,1]^2$.
The nonlinear system of this two-dimensional reaction-diffusion problem writes
\begin{equation*}
\begin{cases}
   & \pt_t u_1  = a+ u_1u_2^2 -(b+1) u_1 + \alpha \Delta u_1,\textrm{ in }\Omega\times ]0,T] \\
    & \pt_t u_2  = bu_1 -u_1 \ u_2^2 +\alpha  \Delta u_2, \textrm{ in } \Omega \times ]0,T], \\
      &  u_1(\xx,0)=u^0(\xx)=2+0.25y,  \textrm{ in } \Omega \\
            & u_2(\xx,0)=v_0(\xx)=1 + 0.8x,  \textrm{ in } \Omega, \\
            & \pt_n u_1 =0 ,\ \pt \Omega,\\
  & \pt_n u_2 = 0,\  \pt \Omega.
\end{cases}
\end{equation*}
We now have to deal with a nonlinearity as well as two unknowns. Our parameter, denoted $\boldsymbol{\mu}=(a,b,\alpha)$, belongs to $[2,4]\times [1,4] \times [0.001,0.05]$. We have taken an ending time $T=5$. These parameters are standard. We note that, for $b\leq 1 + a^2$, the solutions are stable, and for $\alpha$ small enough, they converge to $(u_l,v_l)=(a,\frac{b}{a})$.\\
We use an Euler implicit scheme for fine solutions with the Newton algorithm to deal with nonlinearity and an explicit 2nd order Runge-Kutta scheme (RK2) for the coarse mesh.
Indeed, the solutions blow up with an explicit Euler scheme, whereas it remain stable for our parameters with an order 2 scheme (as RK2).\\

 Thus we took $a={2,2.5,4}$ , $b={1,3,4}$ and $\alpha={0.001, \ 0.005, \  0.01, \ 0.05}$, and tested our NIRB algorithm with the rectification post-treatment on the new parameter $(a,b,\alpha)=(3,2,0.008)$ with $h=0.02 = \Delta t_F \simeq H^2 = \Delta t_G^2$ ($\Delta T_G=H=0.1$).
   The relative $H_0^1$ errors of the NIRB approximation with rectification post-treatment \eqref{errorParabolikrect} and of the FEM fine approximation \eqref{errorParabolikFEM}) are displayed in \figref{fig: Bruss}. Here, we do not know the exact solutions but we observe a gain of accuracy of factor 20 with 30 modes on the relative $H_0^1$ error with the NIRB solutions compared to the coarse FEM one. 
   
   \begin{figure}
      \centering
\includegraphics[scale=0.4]{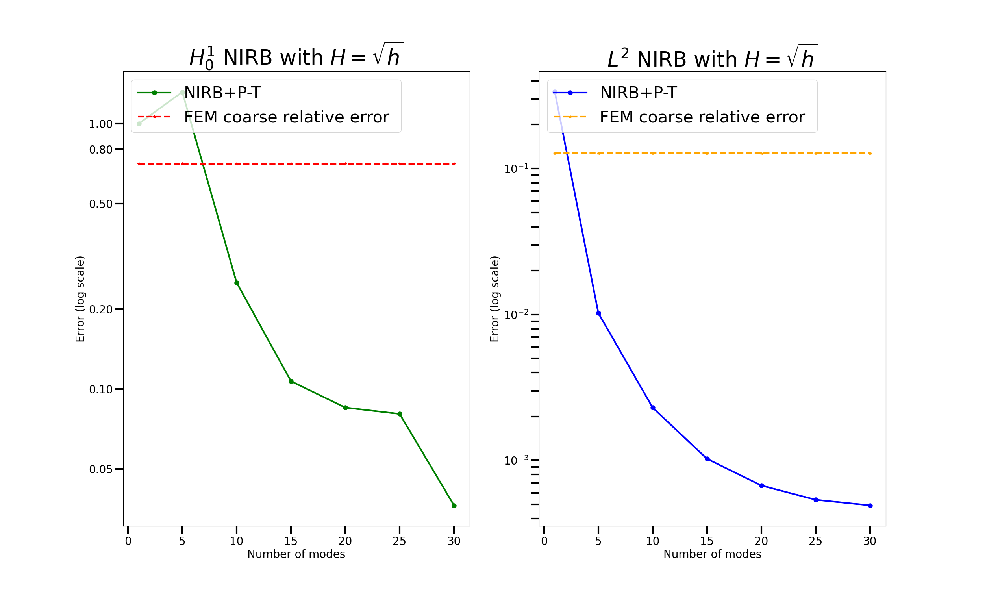}
\caption{Test with $l^{\infty}(0,\dots,N_T;H^1(\Omega))$ relative errors with a new parameter $(a,b,\alpha)=(3,2,0.008)$, $t_0=0$, $T=5$, $\Omega=[0,1] \times [0,1]$ (NIRB + rectification post-treatment compared to coarse FEM error)}
\label{fig: Bruss}
\end{figure}

   In \figref{fig:solNIRB} follows the NIRB rectified solution for $N=10$ modes at time $T=5$ for the two variables $u_1$ and $u_2$. The approximation is close to $(a,\frac{b}{a})=(3,2/3)$ as expected.\\

  \begin{figure}
     \centering
     \begin{subfigure}[b]{0.42\textwidth}
         \centering
         \includegraphics[scale=0.16]{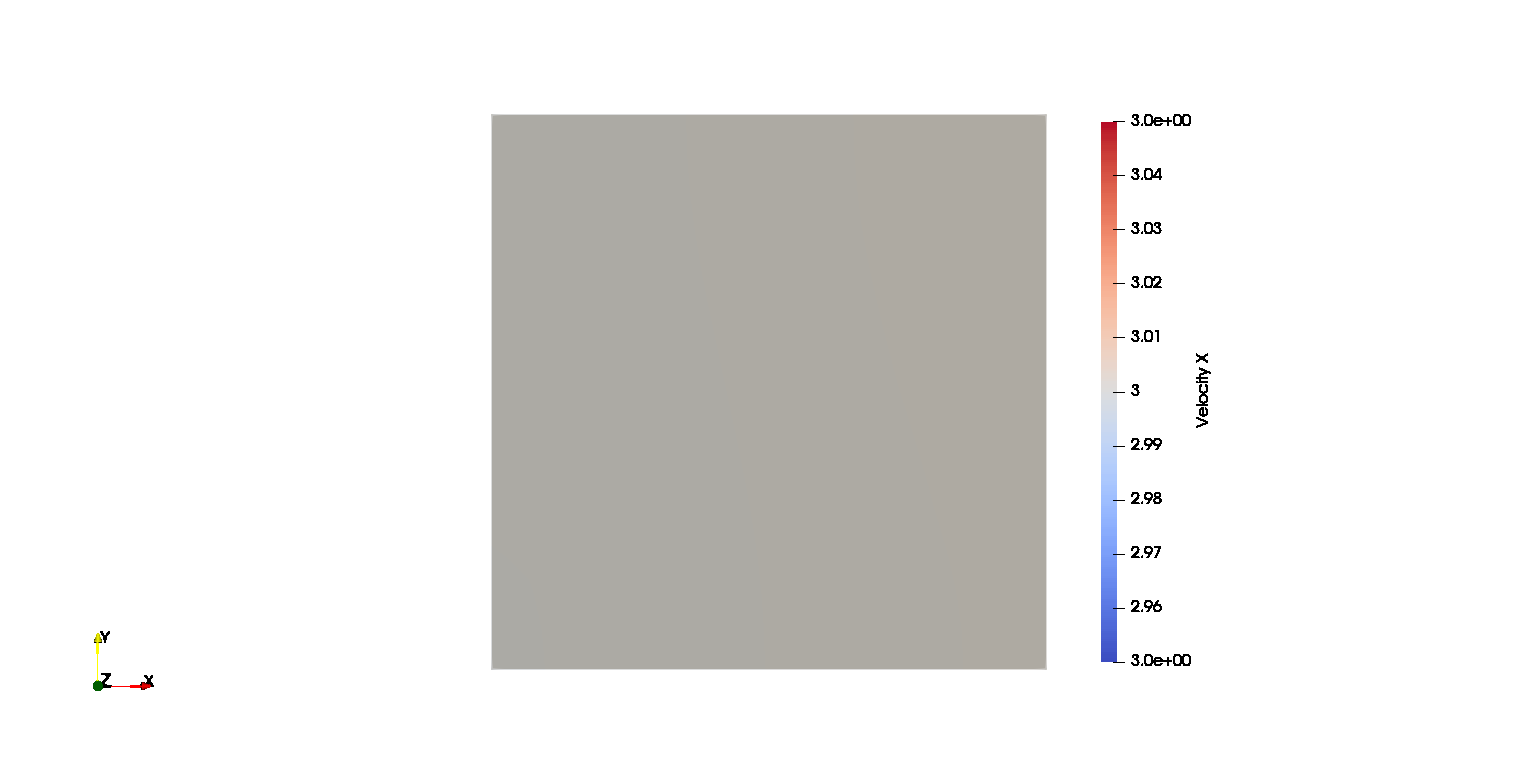}
         \caption{$u_1$}
     \end{subfigure}
     \quad \quad \quad \;
     \begin{subfigure}[b]{0.45\textwidth}
         \centering
         \includegraphics[scale=0.18]{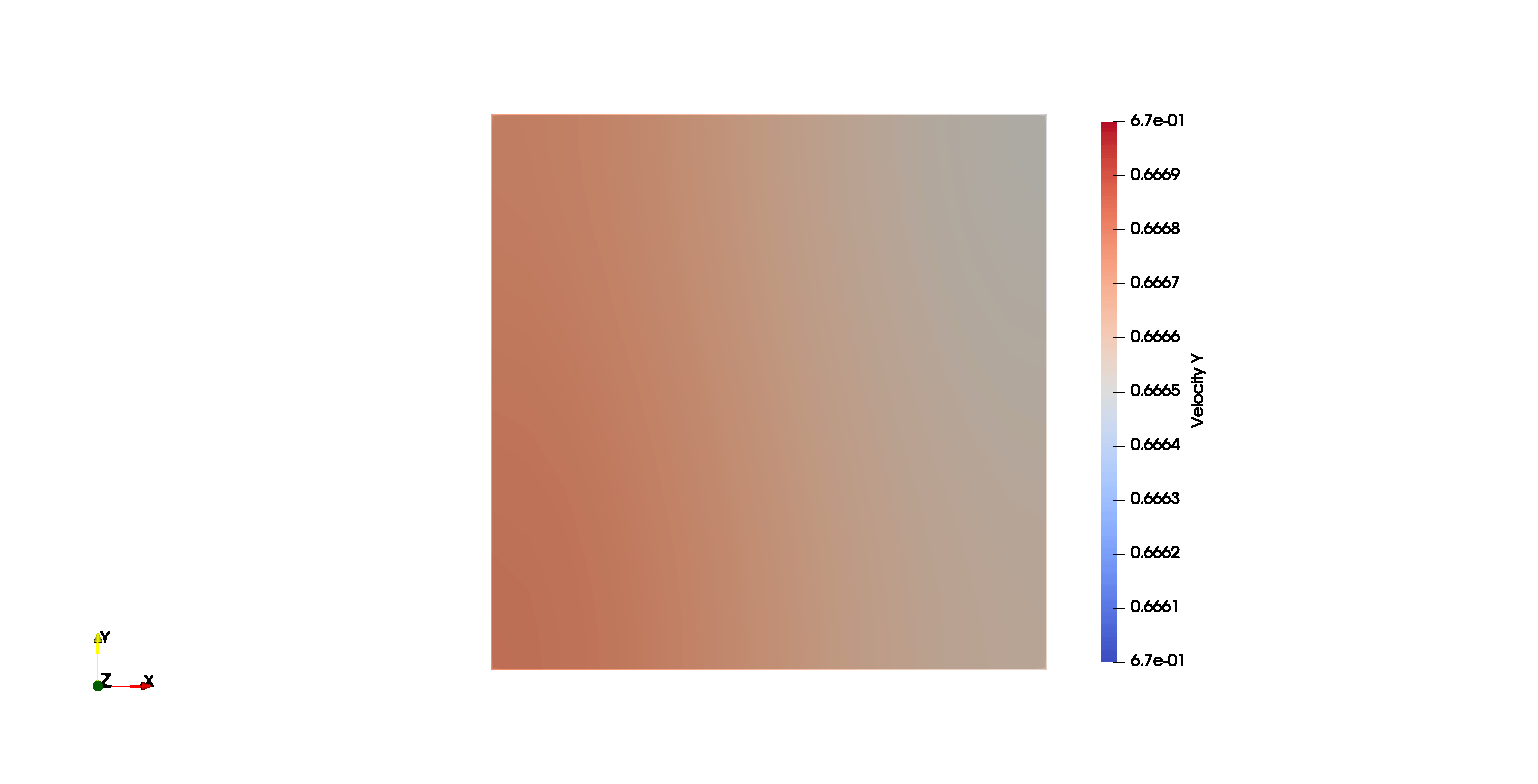}
         \caption{$u_2$}
     \end{subfigure}
     \quad \quad \;
        \caption{NIRB approximations ($u_1$ (left) and $u_2$ (right)) for $T=5$ with $N=10$ modes (close to $(a,\frac{b}{a})=(3,2/3)$) (note the small scale that is used)}
        \label{fig:solNIRB}
   \end{figure}

  \subsubsection{ Time execution (min,sec)}
\label{time3}
Finally, the computational costs are well saved during the online part of the algorithm as it is highlited with this example. Indeed, since there is a nonlinearity, the system must be solved with several iterations for each time step, and thus is quite expensive for a HF approximation. We recall that with an explicit Euler scheme, the solution blows up whereas for a explicit RK2 scheme (without iteration), the solution converges to the expected values $(a,\frac{b}{a})$.

We present the FEM and NIRB runtimes in \ref{runtimes13} and \ref{runtimes23}.
   \begin{table}[tbhp]
     {\footnotesize
       \caption{ FEM runtimes (min:sec)}\label{runtimes13}
   \begin{center}
    \begin{tabular}{ |c| c| }
      \hline
   FEM high fidelity solver & FEM coarse solution \\
    \hline
    4:52 & 00:02\\
    \hline
    \end{tabular}
   \end{center}
   }
   \end{table}
   \begin{table}[tbhp]
     {\footnotesize
       \caption{NIRB runtimes ($N=10$, h:min:sec)}\label{runtimes23}
   \begin{center}
    \begin{tabular}{|c|c|}
      \hline
      NIRB Offline & classical rectified NIRB online\\
      \hline
      1:53:00 & 00:04:00\\
      \hline
    \end{tabular}
   \end{center}
   }
   \end{table}
\bibliography{parabolic}
\bibliographystyle{plain}
\end{document}